\newcommand{\Bmu}{\mbox{$\raisebox{-0.59ex}
  {$l$}\hspace{-0.18em}\mu\hspace{-0.88em}\raisebox{-0.98ex}{\scalebox{2}
  {$\color{white}.$}}\hspace{-0.416em}\raisebox{+0.88ex}
  {$\color{white}.$}\hspace{0.46em}$}{}}
\def\d{\delta}
\def\D{\Delta}
\def\s{\sigma}
\def\ve{\varepsilon}
\def\Id{\mathop{\rm Id}\nolimits}
\def\lra{\longrightarrow}
\def\ot{\otimes}
\def\odots{\ot\cdots\ot}
\def\nb{\nabla}
\def\lra{\longrightarrow}
\def\rt{\triangleright}
\def\lt{\triangleleft}
\def\D{\Delta}
\def\Id{\mathop{\rm Id}\nolimits}
\newcommand{\ps}[1]{~\hspace{-4pt}_{^{(#1)}}}
\newcommand{\ns}[1]{~\hspace{-4pt}_{_{{<#1>}}}}
\newcommand{\nsb}[1]{~\hspace{-4pt}_{^{[#1]}}}
\newcommand{\C}[1]{\mathcal{#1}}
\newcommand{\wbar}[1]{\overline{#1}}
\newcommand{\ie}{{\it i.e.\/}\ }
\renewcommand{\leq}{\leqslant}
\renewcommand{\geq}{\geqslant}
\numberwithin{equation}{section}
\newtheorem{theorem}{Theorem}[section]
\newtheorem{proposition}[theorem]{Proposition}
\newtheorem{lemma}[theorem]{Lemma}
\newtheorem{corollary}[theorem]{Corollary}
\theoremstyle{definition}
\newtheorem{example}[theorem]{Example}
\DeclareFontFamily{OT1}{pzc}{}
\DeclareFontShape{OT1}{pzc}{m}{it}{<-> s * [1.200] pzcmi7t}{}
\DeclareMathAlphabet{\mb}{OT1}{pzc}{m}{it}
\begin{document}
\title[Inertial Hopf-cyclic homology]{Inertial  Hopf-cyclic homology}
\author{Tomasz Maszczyk}
\address{University of  Warsaw, 
 Institute of Mathematics,\\
ul.\ Banacha 2,
02--097 Warszawa, Poland}
\email{t.maszczyk@uw.edu.pl}
\author{Serkan  S\"utl\"u}
\address{I\c{s}{\i}k University, Department of Mathematics, \\ 34980, \c{S}ile, \.Istanbul, Turkey}
\email{serkan.sutlu@isikun.edu.tr}
\begin{abstract} We construct, study and apply a characteristic map from the relative periodic cyclic homology of the quotient map for a group action to the periodic Hopf-cyclic homology with coefficients associated with inertia of the action. This result admits, and in fact comes from, its noncommutative-geometric, or quantized, counterpart. 
The crucial ingredient is the construction of the appropriate quantization of the cyclic nerve of the action groupoid, the cyclic object related to inertia, as the Connes-cyclic dual of a Hopf-cyclic object with coefficients in some stable anti-Yetter--Drinfeld module quantizing the Brylinski space.
For the Hopf-Galois quantization of the case of trivial inertia, we find a non-trivial identification of our characteristic map with a well-known isomorphism of Jara--\c{S}tefan. In presence of nontrivial inertia, 
for an analytic ramified Galois double cover, we show that the cokernel of our characteristic map, which can be interpreted as  an invariant of inertia modulo topology of the maximal free action, is supported on the branch locus of the quotient map. Finally, we use our inertial Hopf-cyclic object to construct a new invariant of finite-dimensional algebras.   

\vspace{2em}
\noindent\emph{2010 MSC: }{19D55C, 13B05 (primary), and  55R91 (secondary).}

\noindent\emph{Keywords:} Inertia groupoid, Hopf-cyclic homology, coalgebra-Galois extensions.
\end{abstract}

\maketitle
\pagebreak
\tableofcontents

\section{Introduction} 

Hopf-cyclic homology arose from studying the index theory of transversally elliptic operators \cite{ConnMosc98} to become an invariant of a noncommutative symmetry given in terms of a Hopf algebra. As such, it is a generalization of group and Lie algebra homology.  Its importance stems from the existence of characteristic maps relating it with cyclic homology, establishing a relation between noncommutative symmetry and noncommutative topology. 

Presented as a homology theory associated to a Hopf algebra and a pair of elements, consisting of a group-like and a character, called a modular pair in involution (MPI) in \cite{ConnMosc98}, Hopf-cyclic homology was axiomatized in \cite{HajaKhalRangSomm04-I,HajaKhalRangSomm04-II} as a homology theory associated to a Hopf algebra and a coefficient space, called a stable anti-Yetter-Drinfeld (SAYD) module. In the language of \cite{HajaKhalRangSomm04-II}, the MPI of \cite{ConnMosc98} represented a one dimensional coefficient space. 

However, a concrete nontrivial application of more general SAYD coefficient space had to wait until \cite{JaraStef06} of Jara and \c{S}tefan, in which a class of SAYD modules, a generalization of one of the examples from \cite{HajaKhalRangSomm04-I,HajaKhalRangSomm04-II}, arose in a canonical way within the context of the Hopf-Galois extensions. The application consisted there in establishing a cyclic duality between the (co)cyclic objects computing the relative cyclic homology and the Hopf-cyclic homology with appropriate SAYD coefficients associated with the Hopf-Galois symmetry of the extension. 

Next, in \cite{RangSutl}, further examples of MPI's were constructed for bicrossproduct Hopf algebras associated to Lie algebras. These MPI's were then upgraded into nontrivial SAYD modules in \cite{RangSutl-III}. More precisely, a 4-dimensional SAYD module over the Schwarzian quotient $\C{H}_{1S}$ of the Connes-Moscovici Hopf algebra $\C{H}_1$ was illustrated in \cite{RangSutl-III}. On the other hand, SAYD modules over the universal enveloping algebras were studied in \cite{RangSutl-II}, and a SAYD module was constructed for any given dimension; namely the truncated Weil algebras.

Each time a new situation was considered, a new type of SAYD coefficients was needed to allow the use of Hopf-cyclic homology. In the present paper we show how a Hopf-cyclic homology with appropriate SAYD coefficients quantizes the cyclic nerve of the action groupoid. We also show how to quantize the canonical internal functor from the action groupoid to the descent groupoid after passing to their cyclic nerves. In this context the prominent role is played by the Brylinski $G$-space, whose action groupoid is the inertia groupoid \cite{Moer02} of the action groupoid. The fundamental role of the Brylinski scheme relies on the fact that its category of equivariant sheaves of $\mathcal{O}$-modules is equivalent to the Drinfeld double of the monoidal category of equivariant sheaves of $\mathcal{O}$-modules on the $G$-scheme $X$ itself, at least for a finite group $G$, conjecturally in much wider generality \cite{Hin07}. Moreover, the inertia groupoid, through the so called \emph{extended quotient} construction, plays a role in 
 \emph{local Langlands duality} \cite{AubeBaumPlymSoll14}, as well as in the \emph{orbifold and stringy cohomology} \cite{ChenRuan04,LupeUrib04,BrylNist94}. Our construction can be understood as a vertical (along the quotient map) companion of the orbifold cohomology, when the orbifold is defined as a global quotient.  The quantization of the Brylinski $G$-space is provided by the following SAYD module. 
 
\noindent {\bf Theorem \ref{thm-SAYD}.}
{\it
For any right $\C{H}$-comodule algebra $A$ the quotient vector space 
\[
M:=(\C{H}\ot A)/\langle \mb{h}a'\ps{1}\ot aa'\ps{0}-\mb{h}\ot a'a \mid \mb{h}\in \C{H},\,a,a' \in A \rangle .
\]
is a stable anti-Yetter Drinfeld $\C{H}$-module with the left $\C{H}$-module structure 
\begin{align*}
\C{H}\ot M&\rightarrow M,\\
 \mb{h}'\ot [\mb{h}\ot a]&\mapsto \mb{h}'[\mb{h}\ot a]:=[\mb{h}'\mb{h}\ot a],
\end{align*} 
and the right $H$-comodule structure given by
\begin{align*}
M&\rightarrow M\ot \C{H} ,\\
 [\mb{h}\ot a]&\mapsto  [\mb{h}\ot a]\ns{0}\ot  [\mb{h}\ot a]\ns{1}:=[\mb{h}\ps{2} \ot a\ps{0}]\ot \mb{h}\ps{3} a\ps{1}S(\mb{h}\ps{1}).
\end{align*}
}

The obove SAYD module in the commutative case reproduces the affine Brylinski $G$-scheme. This generalizes our observation from \cite{MaszSutl14} that for a special case of the homogeneous quotient-coalgebra-Galois extensions the resulting cyclic object is related to inertia phenomena. We must stress, however, that in the Galois case, when the inertia is trivial, finding the appropriate Hopf-cyclic object with SAYD coefficients is a matter of transporting the cyclic object structure using an iterated invertible canonical map. In presence of inertia, when the canonical map is not invertible, finding a candidate for the Hopf-cyclic object beeing the receptacle for the map induced by iterating the canonical map is not trivial. Especially, the Hopf-Galois case cannot give any clue about the relation of the Hopf-cyclic homology with inertia. 
Our generalization to the noncommutative case is quite nontrivial for the presence of an additional Hopf algebra-type symmetry, necessary for the Hopf-cyclic approach. This cannot be read from the classical picture where this additional datum cancels out (see the last remark of \ref{bryl}). The appropriate noncommutative generalization of that cancellation is provided by  Proposition~\ref{basechange} about the Hopf algebra change for SAYD-modules and Hopf-algebra-change invariance of the inertial Hopf-cyclic object.  Showing that  the Connes-cyclic dual of a Hopf-cyclic object with coefficients in our stable anti-Yetter--Drinfeld module is an appropriate quantization of the cyclic nerve of the action groupoid, the cyclic object related to inertia, is the main conceptual novelty of the present paper.

Chronological development of the present idea goes back to \cite{MaszSutl14}, where we generalized a special case of the  Jara-\c{S}tefan isomorphism from homogeneous Hopf-Galois extensions to  homogeneous quotient coalgebra-Galois extensions, covering the case of quantum circle bundles over Podle\'{s} spheres \cite{Pod87, Brze96, Brze97}. In this generality, the thus obtained isomorphism is a cyclic-homological enhancement of the Takeuchi-Galois correspondence between the left coideal subalgebras, and the quotient right module coalgebras of a Hopf algebra \cite{MaszSutl14}. The latter generalizes the classical bijective correspondence between pointed orbits and stabilizers. 
Now, by constructing an explicit isomorphism, we show that in the Hopf-Galois case, when the module coalgebra is the Hopf-algebra itself, our (apparently different) construction produces the same SAYD-module as the one of Jara and \c{S}tefan~\cite{JaraStef06}.
Based on this, we generalize the Jara-\c{S}tefan isomorphism from Hopf-Galois extensions to quotient coalgebra-Galois extensions,  including the quantum instanton principal bundle~\cite{BoneCiccTarl02,BoneCiccDabrTarl04}. 
Together with the quantum circle bundles over Podle\'{s} spheres, they are important noncommutative deformations of principal bundles, which go beyond the Hopf-Galois context. Such a generality is required to quantize Poisson Hopf principal bundles, whose coisotropic Poisson-Galois symmetry is quantized to a module coalgebra-Galois symmetry \cite{BoneCiccTarl02, BoneCiccDabrTarl04}. Encompassing these examples requires a new approach in which we stress the necessity of separating the role of the \emph{inertia} of the Hopf algebra coaction from the \emph{Galois symmetry} viewed as the module coalgebra coaction. In the case of the quantum instanton bundle and the quantum circle bundles over Podle\'{s} spheres, the coaction of the Hopf algebra is transitive and hence is much bigger than the Galois symmetry given by the free coaction of the quotient module coalgebra, in opposite to the Hopf-Galois case, where these two symmetries coincide. By necessity forced by examples, the central role of inertia and its relation with Hopf-cyclic homology emerged inevitably.

It is worth mentioning that the comparison between our construction and the one of Jara-\c{S}tefan  \cite{JaraStef06}, which we provide in the present paper,  is quite nontrivial. The main difference consists of the construction of an appropriate SAYD module, which in \cite{JaraStef06} was almost dictated by the Miyashita-Ulbrich action,  absent in our generality. Although this obstacle could be walked around in the special case of the homogeneous quotient coalgebra-Galois extensions of \cite{MaszSutl14}, the (most of) generality discussed in the present paper requires a conceptually new structure of the SAYD module. 

The construction of our characteristic map goes in two steps. First, we construct an isomorphism between a cyclic object of the Takeuchi-Hopf algebroid-cyclic theory of a coring with SAYD coefficients in the comodule algebra itself, and the cyclic objects of the Hopf algebra-cyclic theory of a module coalgebra with coefficients in the aforementioned SAYD module $M$.
Next, we compose it with the map induced by the canonical map. The latter can be viewed as a  morphism of corings with group-like elements, where one of the corings encodes the descend data along the algebra extension, and the other  determines the symmetry given in terms of a module-coalgebra coaction on a comodule-algebra. The Galois condition means that it is an isomorphism of corings, but even in  presence of inertia the map makes sense. The cyclic-dual Hopf-cyclic homology of the first coring turns out to be isomorphic to the relative cyclic homology of the extension, and it depends only on the relative topology of the corresponding principal fibration (at least on the level of relative periodic cyclic homology, and in the classical commutative case). 
By the above isomorphism we started with in the first step, the Hopf-cyclic object of the second coring  turns out to be isomorphic to the Hopf-cyclic homology of the module coalgebra  coacting  on a comodule algebra.

In  the case of trivial inertia, our characteristic map generalizes the Jara-\c{S}tefan isomorphism from Hopf-Galois extensions to arbitrary module coalgebra-Galois extensions.
    
\noindent {\bf Theorem \ref{thm-HP}.}
{\it For any right $\C{H}$-module coalgebra $C$, and any $C$-Galois right $\C{H}$-comodule algebra extension $A$ of an algebra $B$, the relative periodic cyclic homology of the extension $B\subseteq~\!\!A$ is isomorphic to the Hopf-cyclic homology of the right $\C{H}$-module coalgebra $C$ with left-right SAYD coefficients in $M$. In short,
\[HP_{\bullet}(A\hspace{0.2em}|\hspace{0.1em}B)\cong HP^{\C{H}}_{\bullet}(C, M). \]
}

\vspace{-1.5em}
The case of nontrivial inertia is even more interesting. Beyond the $C$-Galois case, allowing non-free coactions of a right $\C{H}$-module coalgebra $C$ on a right $\C{H}$-comodule algebra extension $A$, we still have a map
\[HP_{\bullet}(A\hspace{0.2em}|\hspace{0.1em}B)\longrightarrow HP^{\C{H}}_{\bullet}(C, M). \]
It can be seen as a quantization of the \emph{ characteristic map} from topological invariants of the corresponding family of subgroup orbits to representation-theoretical invariants of a (sub)group action.

The discrepancy between topology and representation theory in the presence of inertia fenomena is a potential source of interesting invariants of equivariant geometry, coming from this map. For instance, we show that after analytification of a ramified Galois double cover, the cokernel of our characteristic map, which can be interpreted as  an invariant of inertia modulo topology of the maximal free action, is supported on the branch locus of the quotient map.

Finally, we apply our construction to associate with any finite-dimensional algebra its inertial Hopf-cyclic object of the universal Hopf algebra coaction, a new cyclic-homological invariant of finite dimensional algebras. Our characteristic map maps the periodic cyclic homology of a given finite-dimensional algebra to that universal inertial periodic Hopf-cyclic homology. We expect that the latter can distinguish algebras with the same periodic cyclic homology, a poor invariant of finite dimensional algebras, which by the Goodwillie theorem  \cite{good} is insensitive to nilpotent extensions.


\section{The classical  context to be quantized}
By a \emph{category of spaces} we  will mean a category with a final object and finite fiber products. In fact, we have in mind especially topological spaces, schemes over a field, complex algebraic and analytic spaces. The present chapter plays the role of a classical motivation for its more involved noncommutative-geometric, or quantum, generalization in the next chapter.      
\subsection{The cyclic nerve} Let us regard an internal category 
\[
\xymatrix{
 X \ar@<0ex>[r] & \mb{C}  \ar@<1ex>[l] \ar@<-1ex>[l] \ar@<1ex>[r] \ar@<-1ex>[r]& \mb{C}\times_{X}\mb{C} \ar@<2ex>[l] \ar@<0ex>[l] \ar@<-2ex>[l]
}
\]
in the category of spaces, with the space of objects $X$ and the space of morphisms  $\mb{C}$, as a truncated simplicial space with the \emph{source} and the \emph{target} maps $s, t: \mb{C}\rightarrow X$,  the \emph{composition} and the \emph{projection} maps $m, p, q:\mb{C}\times_{X}\mb{C}\rightarrow \mb{C}$, $m(c, \widetilde{c})=c\circ\widetilde{c}$, $p(c, \widetilde{c})=c$ and $q(c, \widetilde{c})=\widetilde{c}$  as face maps,  and the \emph{units} map $u: X\rightarrow \mb{C}$ and the  maps $u\times_{X}\mb{C}, \mb{C}\times_{X}u: \mb{C}\rightarrow\mb{C}\times_{X}\mb{C}$ as degeneracy maps, where the fiber product $\mb{C}\times_{X}\mb{C}$ is defined under the flipped span structure
\[
\xymatrix{
&  \mb{C} \ar@<0ex>[ld]_{t} \ar@<0ex>[rd]^{s} & \\
X & & X.
}
\]
In the present paper, we adopt the following inverse convention
\begin{equation}\xymatrix{
x_{0} & x_{1} \ar@<0ex>[l]_{c_{0}} & \cdots\ar@<0ex>[l]_{c_{1}} & x_{n} \ar@<0ex>[l]_{\ c_{n-1}}& x_{0} \ar@<0ex>[l]_{c_{n}}
}\label{cycle}
\end{equation}
to define the \emph{internal cyclic nerve} $CN_{\bullet}(\mb{C})$ of $\mb{C}$ where
\begin{align*}
CN_{n}(\mb{C})=
\left(\mb{C}^{\times_{X}(n + 1)}\right)
\times_{X\times X}X,
\end{align*}
being a cyclic space with respect to 
 the standard faces 
\begin{align}\label{nerve-faces}
\begin{gathered}  
d_{j}: CN_{n}(\mb{C}) \longrightarrow  CN_{n-1}(\mb{C}),\\
d_{j}\left(c_{0},\ldots, c_{n}\right) =\begin{cases} \left(c_{0},\ldots, c_{j}\circ c_{j+1}, \ldots c_{n}\right), & \mbox{if } \ 0\leq j\leq n-1 \\ \left(c_{n}\circ c_{0},\ldots,  c_{n-1}\right), & \mbox{if }\ i=n \end{cases} 
\end{gathered}     
\end{align}
degeneracies 
\begin{align}\label{nerve-degen}
\begin{gathered}  
s_{j}: CN_{n}(\mb{C}) \longrightarrow  CN_{n+1}(\mb{C}),\\
s_{j}\left(c_{0},\ldots, c_{n}\right) =\begin{cases} \left(c_{0},\ldots, c_{j}, {\rm id}_{x_{j+1}}, c_{j+1}, \ldots c_{n}\right), & \mbox{if } \ 0\leq j\leq n-1 \\ \left( c_{0},\ldots,  c_{n}, {\rm id}_{x_{0}}\right), & \mbox{if }\ i=n \end{cases}, 
\end{gathered}     
\end{align}
and the cyclic operator
\begin{align}\label{nerve-cyc}
\begin{gathered}  
t: CN_{n}(\mb{C}) \longrightarrow  CN_{n}(\mb{C}),\\
t\left(c_{0}, c_{1},\ldots, c_{n}\right) = \left(c_{n}, c_{0}, \ldots, c_{n-1}\right).
\end{gathered}     
\end{align}
The purpose of our convention is to reconcile  the category theory convention of composing morphisms with the convention of composing spans using fiber products and the right group action convention customary in the theory of principal bundles or the left group action on algebras (of functions) customary in Galois theory. 

\subsection{The inertial cyclic space as a cyclic nerve}
\subsubsection{The inertial cyclic space}\label{bryl}
For a given right $G$-space $X$ one defines a right $G$-space
\[
Bry_{G}(X):=\{ (g, x)\in Ad(G)\times X\ \mid\ xg=x\},
\] 
with the diagonal right $G$-action by right conjugations on $Ad(G)=G$ and a given right $G$-action on $X$, equipped with a $G$-equivariant morphism into $Ad(G)$ \cite{Bry87, Moer02}. It is called the \emph{Brylinski $G$-space}. The action groupoid of that $G$-space is the \emph{inertia groupoid} of the action groupoid of the initial $G$-space $X$. 

\noindent{\bf Remark.} In the case of a transitive right $G$-action on $X$ with a distinguished point $x_{0}\in X$,  we can assume that $X=\raisebox{-0.3ex}{\it K}\hspace{0.5ex}\backslash \raisebox{0.1ex}{\it G}$, the left quotient by the stabilizer $K$ of $x_{0}$. Then the Brylinski $G$-space is a right $G$-orbit  of $K=K\times \{Ke\}$ in $Ad(G)\times (\raisebox{-0.3ex}{\it K}\hspace{0.5ex}\backslash \raisebox{0.1ex}{\it G})$ under the diagonal right $G$-action. Moreover, in this case, the stabilizer of a point $(k, Ke)g$ of the Brylinski $G$-space in the inertia groupoid is equal to $g^{-1}Z_{K}(k)g$, the conjugate centralizer $Z_{K}(k)$ of $k$ in $K$.

\noindent{\bf Example.} An example of such a situation is provided by the Hopf fibration of $S^{7}$ over $S^{4}$ with the fibre $S^{3}$, also called \emph{the instanton principal bundle}, where $G=U(4)$, $H=SU(2)$, $K=U(3)$, $X=\raisebox{-0.3ex}{\it K}\hspace{0.5ex}\backslash \raisebox{0.1ex}{\it G}=\raisebox{-0.3ex}{{\it U}\! (3)}\hspace{0.1ex}\backslash \raisebox{0.1ex}{{\it U}\! (4)}= S^{7}$, $Y=\raisebox{0.1ex}{\it X}\slash \raisebox{-0.3ex}{\it H}=\raisebox{-0.3ex}{{\it U}\! (3)}\hspace{0.1ex}\backslash \raisebox{0.1ex}{{{\it U}\! (4)}}\slash \raisebox{-0.3ex}{{\it SU}\! (2)}=S^{4}$. 

For every right $G$-space $X$ and a closed subgroup $H<G$, we have a cyclic space~\cite{bo_ka_72} $I_{\bullet}^{H, G}(X)$ with
$$I_{n}^{H, G}(X) := H^{n+1}\times_{G}Bry_{G}(X),$$
where $H^{n+1}$ is the Cartesian product of copies of $H$, and the fibre product over $G$  is meant with respect to a pair of maps 
\begin{align}
\begin{split}
H^{n+1} &\longrightarrow G,\ \ \ 
(h_{0},\ldots,h_{n})\mapsto h_{0}\cdots h_{n},\\   
Bry_{G}(X) &\longrightarrow G,\ \ \ 
(g, x)\mapsto g,
\end{split}   
\end{align}
and the cyclic space structure is given by the faces 
\begin{align*}
\begin{gathered}
d_{j}: I_{n}^{H, G}(X) \longrightarrow I_{n-1}^{H, G}(X),\\
d_{j}\left((h_{0},\ldots, h_{n}), (g, x)\right) = \left((h_{0},\ldots, h_{j}h_{j+1}, \ldots h_{n}), (g, x)\right), 
\end{gathered}   
\end{align*}
for $0\leq j\leq n-1$, 
\begin{align*}
d_{n}\left((h_{0},\ldots, h_{n}), (g, x)\right) = \left((h_{n}h_{0}, \ldots h_{n-1}), (h_{n}gh_{n}^{-1}, xh_{n}^{-1})\right), 
\end{align*}
degeneracies 
\begin{align*}
\begin{gathered}
s_{j}: I_{n}^{H, G}(X) \longrightarrow I_{n+1}^{H, G}(X),\\ 
s_{j}\left((h_{0},\ldots, h_{n}), (g, x)\right) = \left((h_{0},\ldots, h_{j-1}, e, h_{j}, \ldots h_{n}), (g, x)\right),
\end{gathered}   
\end{align*}
for $0\leq j\leq n$, and the cyclic operator
\begin{align*}
\begin{gathered}
t: I_{n}^{H, G}(X)\longrightarrow I_{n}^{H, G}(X),\\
t\left((h_{0},\ldots, h_{n}), (g, x)\right) = \left((h_{n},  h_{0}\ldots, \ldots h_{n-1}), (h_{n}g h_{n}^{-1}, x h_{n}^{-1})\right).
\end{gathered}     
\end{align*} \color{black}

The cocyclic space structure of the cocyclic dual $I^{\bullet}_{H, G}(X)$ to cyclic space $I_{\bullet}^{H, G}(X)$ is given by the cofaces 
\begin{align*}
\begin{gathered}
\delta_{j}: I^{n}_{H, G}(X) \longrightarrow I^{n+1}_{H, G}(X),\\
\delta_{j}\left((h_{0},\ldots, h_{n}), (g, x)\right) = \left((h_{0},\ldots, h_{j-1}, e, h_{j}, \ldots h_{n}), (g, x)\right),
\end{gathered}   
\end{align*}
for $0\leq j\leq n$, the codegeneracies 
\begin{align*}
\begin{gathered}
\sigma_{j}: I^{n}_{H, G}(X) \longrightarrow I^{n-1}_{H, G}(X),\\
\sigma_{j}\left((h_{0},\ldots, h_{n}), (g, x)\right) = \left((h_{0},\ldots, h_{j}h_{j+1}, \ldots h_{n}), (g, x)\right),
\end{gathered}   
\end{align*}
for $0\leq j\leq n-1$,
\begin{align*} 
\sigma_{n}\left((h_{0},\ldots, h_{n}), (g, x)\right) = \left((h_{n}h_{0},\ldots,  h_{n-1}), (h_{n}gh_{n}^{-1}, xh_{n}^{-1})\right),
\end{align*}
and the cocyclic operator
\begin{align*}
\begin{gathered}
\tau: I^{n}_{H, G}(X)\longrightarrow I^{n}_{H, G}(X),\\
\tau\left((h_{0},\ldots, h_{n}), (g, x)\right) = \left((h_{1},\ldots, \ldots h_{n}, h_{0}), (h_{0}^{-1}g h_{0}, x h_{0})\right).
\end{gathered}     
\end{align*}

\paragraph{\bf Remark.} This cocyclic dual dualized again by passing to algebras of functions is again a cyclic object in the category of commutative algebras. The latter is the main construction we are to quantize with use of Hopf-cyclic homology.
 
\noindent{\bf Remark.} Since a point of $H^{n+1}\times_{G}Bry_{G}(X)$ can be written as
\begin{align*}
((h_{0},\ldots, h_{n}), (g,x))=((h_{0},\ldots, h_{n}), (h_{0}\cdots h_{n},x)),   
\end{align*} 
the (co)cyclic object $I_{\bullet}^{H, G}(X)$ is in fact independent of $G$,
\begin{align*}
H^{n+1}\times_{G}Bry_{G}(X)=H^{n+1}\times_{H}Bry_{H}(X).   
\end{align*}
Moreover, whenever $H$ acts on $X$ freely the latter simplifies even further 
\begin{align*}
H^{n+1}\times_{H}Bry_{H}(X)=(H^{n+1}\times_{H}\{pt\})\times X   
\end{align*}
where $\{pt\}\rightarrow H$ picks the neutral element.
This explains why in the classical commutative Hopf-Galois case one can  see neither the role of  the Brylinski scheme nor of the Hopf algebra $\C{H}=\mathcal{O}(G)$. All that makes a Hopf-entwined-coalgebra-Galois quantization nontrivial for the lack of hint that in fact the roles of the Hopf-type symmetry (the noncommutative counterpart of the $G$-action) and the module-coalgebra-type symmetry    
(the noncommutative counterpart of the $H$-action) are different. The former relates the Brylinski scheme to the Hopf-coaction part of the entwining, the latter corresponds to the Galois-type symmetry. 
\subsubsection{The cyclic nerve of the action groupoid}
Let $G$ be a topological group and $X$ be a right $G$-space. We will consider an action groupoid $H\times X$ of $X$ acted on by the subgroup $H$, with the right $H$-action 
\begin{align}
\begin{gathered}
X\times H\rightarrow X ,\\
\left(x, h\right)    \mapsto xh.
\end{gathered}
\end{align} 
It is equipped with the source and target maps
\begin{align}
\begin{gathered}
s, t: H\times X\rightarrow X ,\\
s\left(h, x\right) = x,\ \ t\left(h, x\right) = xh^{-1},
\end{gathered}
\end{align}
the units map
\begin{align}
\begin{gathered}
u: X\rightarrow H\times X ,\\
u\left( x\right) = \left(e, x\right),
\end{gathered}
\end{align}
and the composition
\begin{align}
\begin{gathered}
m: \left( H\times X\right)\times_{X}\left( H\times X\right)\rightarrow H\times X ,\\
\left( h, x\right)\circ\left(\widetilde{h}, \widetilde{x}\right) = \left(h\widetilde{h}, \widetilde{x}\right).
\end{gathered}
\end{align}
\begin{proposition}
For an action groupoid $H\times X$,  one has an isomorphism of cyclic spaces
\begin{align}\label{iso-cyc-act}
\begin{gathered}
CN_{\bullet}(H\times X)\rightarrow I_{\bullet}^{H, G}(X),\\
\left( (h_{0}, x_{0}), \ldots, (h_{n}, x_{n})\right) \mapsto \left((h_{1}, \ldots, h_{n}, h_{0}), (h_{1}\ldots h_{n}h_{0}, x_{1}\right).
\end{gathered}
\end{align}  
\end{proposition}
\begin{proof}Checking the preservation of the cyclic structure by the map  $\ref{iso-cyc-act}$ is straightforward. It is enough to observe that the inverse to $\ref{iso-cyc-act}$ is provided by the well defined map 
\begin{align}\label{I-CN}
\begin{gathered}
CN_{\bullet}(H\times X) \leftarrow I_{\bullet}^{H, G}(X),\\
\left( (h_{n}, x_{n}), (h_{0}, x_{0}), \ldots, (h_{n-1}, x_{n-1})\right)\mapsfrom  \left((h_{0}, \ldots, h_{n}), (h_{0}\ldots h_{n}, x\right)) 
\end{gathered}
\end{align}
where $x_{0}:=x$ and $x_{i}:=xh_{0}\ldots h_{i-1}$ for $i>0$.
\end{proof}
\subsection{The Alexander-Spanier cyclic space as a cyclic nerve} 
\subsubsection{The Alexander-Spanier cyclic space} For every continuous (resp. regular, etc.) map $X\rightarrow Y$, we have a standard cyclic space $Y^{X}_{\bullet }$,  with $Y^{X}_{n}$ being the fibre product of $n + 1$ copies of a space $X$ over $Y$, equipped with the standard faces 
\begin{align}\label{AS-faces}
\begin{gathered}  
d_{j}: Y^{X}_{n } \longrightarrow  Y^{X}_{n-1},\\
d_{j}\left(x_{0},\ldots, x_{n}\right) =
\left(x_{0},\ldots, x_{j-1},  x_{j+1}, \ldots x_{n}\right) \ \  \mbox{for} \ 0\leq j\leq n,  
\end{gathered}     
\end{align}
degeneracies 
\begin{align}\label{AS-degen}
\begin{gathered}  
s_{j}: Y^{X}_{n } \longrightarrow  Y^{X}_{n +1},\\
s_{j}\left(x_{0},\ldots, x_{n}\right) = \left(x_{0},\ldots, x_{j},  x_{j}, \ldots x_{n}\right), \ \  \mbox{for} \ 0\leq j\leq n,
\end{gathered}     
\end{align}
and the cyclic operator
\begin{align}\label{AS-cyc}
\begin{gathered}  
t: Y^{X}_{n} \longrightarrow  Y^{X}_{n},\\
t\left(x_{0}, x_{1},\ldots, x_{n}\right) = \left(x_{n}, x_{0}, \ldots, x_{n-1}\right).
\end{gathered}     
\end{align}
\paragraph{\bf Remark.} We call the cyclic space $Y^{X}_{\bullet}$ Alexander-Spanier referring to the well known fact that, for $Y$ being a one point space, its simplicial part of the cyclic structure induces on spaces of function germs along the diagonals a structure of the standard complex computing the Alexander-Spanier cohomology of $X$.

By $X_{Y}^{\bullet}$  we will denote the cocyclic space whose  Connes-cyclic-dual is the Alexander-Spanier cyclic space $Y^{X}_{\bullet}$. Explicitly, it is equipped with the cofaces 
\begin{align*}
\begin{gathered}  
\delta^{j}: X_{Y}^{n} \longrightarrow  X_{Y}^{n+1},\\
\delta^{j}\left(x_{0},\ldots, x_{n}\right) = \begin{cases}\left(x_{0},\ldots, x_{j},  x_{j}, \ldots x_{n}\right), & \ \  \mbox{if} \ 0\leq j\leq n,\\
\left(x_{0}, x_{1}\ldots, x_{n},  x_{0}\right), & \ \  \mbox{if} \  j= n+1,
\end{cases}
\end{gathered}     
\end{align*}
the codegeneracies 
\begin{align*}
\begin{gathered}  
\sigma^{j}: X_{Y}^{n} \longrightarrow  X_{Y}^{n -1},\\
\sigma^{j}\left(x_{0},\ldots, x_{n}\right) = \left(x_{0},\ldots, x_{j},  x_{j+2}, \ldots x_{n}\right),
\end{gathered}     
\end{align*}
for $0\leq j\leq n-1$, and the cocyclic operator
\begin{align*}
\begin{gathered}  
\tau: X_{Y}^{n} \longrightarrow  X_{Y}^{n},\\
\tau\left(x_{0}, x_{1},\ldots, x_{n}\right) = \left(x_{1},  \ldots, x_{n}, x_{0},\right).
\end{gathered}     
\end{align*}
Note that for  $Y$ being a point and $X$ being a compact smooth  manifold, the periodic cyclic homology of the cyclic space of smooth functions on this cocyclic manifold computes a $\mathbb{Z}/ 2\mathbb{Z}$-graded version of the deRham cohomology of $X$ \cite{Conn83}.
\subsubsection{An isomorphism to a cyclic nerve} 
\begin{proposition} A continuous map $X\rightarrow Y$ equalizing the pair $s, t:  \mb{C}\rightarrow X$
\[
\xymatrix{
Y &  X \ar@<0ex>[l] & \mb{C}  \ar@<0.5ex>[l] \ar@<-0.5ex>[l] 
}
\]      
induces a map of cyclic spaces
\begin{align}
\begin{gathered}
CN_{\bullet}(\mb{C})\rightarrow Y^{X}_{\bullet},\\
\left(c_{0}, c_{1},\ldots, c_{n}\right)\mapsto \left(x_{1}, \ldots, x_{n}, x_{0}\right)
\end{gathered}
\end{align}
for any cycle as in \ref{cycle}.
\end{proposition}
\begin{proof}It is a straightforward consequence of the definitions \ref{nerve-faces} - \ref{nerve-cyc} and \ref{AS-faces} - \ref{AS-cyc} 
of the cyclic structures. \end{proof}


\begin{proposition}\label{alex-pair}
For a pair groupoid $X\times_{Y}\hspace{-0.2em}X$, one has an isomorphism of cyclic spaces
\begin{align}\label{iso-cyc-pair}
\begin{gathered}
CN_{\bullet}(X\times_{Y}\hspace{-0.2em}X)\rightarrow Y^{X}_{\bullet},\\
\left( (x_{0}, x_{1}), (x_{1}, x_{2}),\ldots, (x_{n-1}, x_{n}), (x_{n}, x_{0})\right) \mapsto \left(x_{1}, \ldots, x_{n}, x_{0}\right).
\end{gathered}
\end{align}  
\end{proposition}
\begin{proof}Checking the preservation of the cyclic structure by  $\ref{iso-cyc-pair}$ is straightforward. It is enough to observe that the inverse to $\ref{iso-cyc-pair}$ is provided by the well defined map 
\begin{align}\label{AS-CN}
\begin{gathered}
CN_{\bullet}(X\times_{Y}\hspace{-0.2em}X) \leftarrow Y^{X}_{\bullet},\\
\left( (x_{n}, x_{0}), (x_{0}, x_{1}),\ldots, (x_{n-1}, x_{n})\right)\mapsfrom \left(x_{0}, \ldots, x_{n}\right).
\end{gathered}
\end{align}
\end{proof}
\subsection{The characteristic map as a functor-induced map of cyclic nerves} For any group homomorphism $H\rightarrow G$, a right $G$-space $X$, and an $H$-invariant map $X\rightarrow Y$ we have a canonical internal functor of groupoids
\begin{align}\label{int-fun}
\begin{gathered}
H\times X \rightarrow X\times_{Y}\hspace{-0.2em}X,\\
(h, x)\mapsto (xh^{-1}, x)
\end{gathered}
\end{align} 
which induces a map of internal cyclic nerves
\begin{align}\label{map-cyc-ner}
CN_{\bullet}(H\times X)\rightarrow CN_{\bullet}(X\times_{Y}\hspace{-0.2em}X).
\end{align}
We are going to relate \eqref{map-cyc-ner} with  the following \emph{characteristic map} of cyclic spaces.
\begin{proposition}\label{inert-nerve} 
Assuming that $X$ is a right $G$-space, and  the map $X\rightarrow Y$ is $H$-invariant with respect to a group homomorphism $H\rightarrow G$, we have  a morphism of cyclic spaces 
\begin{align}\label{char-on-points}
\begin{gathered}
\pi_{\bullet}: I_{\bullet}^{H, G}(X)\longrightarrow Y^{X}_{\bullet}, \\
((h_{0},\ldots, h_{n}), (g,x))\  \mapsto\  (x, xh_{0},xh_{0}h_{1},\ldots, xh_{0}h_{1}\cdots h_{n-1}).
\end{gathered}
\end{align}
\end{proposition}
\begin{proof}Preservation of the cyclic structure by the map (\ref{char-on-points}) can be easily checked. \end{proof}
 
\paragraph{\bf Remark.} It is also  easy to see that, provided $H$ acts transitively along the fibers of $X\rightarrow Y$, the morphism (\ref{char-on-points}) of cyclic spaces is degree-wise surjective. The fibre over a point $(x_{0},\dots, x_{n})\in Y^{X}_{n}$  on the right hand side, regarded as a cyclic sequence, can be identified then with a space of all cyclic sequences $(h_{0},\dots, h_{n})\in H^{n+1}$ 
such that $x_{j}h_{j}=x_{j+1}$, where the indices are counted modulo $n+1$.
That fibre is a torsor over $H_{x_{0}}\times\cdots\times H_{x_{n}}$ with respect to the left action
\begin{align}
\begin{split}
(\chi_{0},\dots, \chi_{n})(h_{0},\dots, h_{n})=(\chi_{0}h_{0},\dots, \chi_{n}h_{n})
\end{split}
\end{align}
of $(\chi_{0},\dots, \chi_{n})\in H_{x_{0}}\times\cdots\times H_{x_{n}}$. It then follows from the equality  $H_{x_{j+1}} =h_{j}^{-1}H_{x_{j}}h_{j}$ that the stabilizers $H_{x_{j}}$ of points in the cycle are all conjugate. As a result, the geometric fibre  over $(x_{0},\dots, x_{n})$ can be noncanonically identified with  $H_{x_{0}}^{n+1}$. In particular, if $H$ acts freely on $X$ the map (\ref{char-on-points}) becomes an isomorphism of cyclic spaces, with an explicit inverse
\begin{align}\label{inverse-on-points}
\begin{gathered}
I_{n}^{H, G}(X)\longleftarrow Y^{X}_{n}, \\
((x_{0}^{-1}x_{1}, x_{1}^{-1}x_{2}, \ldots, x_{n}^{-1}x_{0}), (e,x)) \  \mapsfrom\  (x_{0},\ldots, x_{n}),
\end{gathered}
\end{align}
where  on the left hand side the translation map 
\begin{align}\label{trans}
\begin{split}
X\times_{Y}X&\longrightarrow H, \\
(x_{0}, x_{1})& \  \mapsto\  x_{0}^{-1}x_{1},
\end{split}
\end{align}
of an $H$-principal bundle $X\rightarrow Y=X/H$ is used.
\subsubsection{The isomorphism of cyclic maps}
\begin{proposition}\label{iso-cyc-inert}  Under assumptions of Proposition~\ref{inert-nerve} one has a commutative diagram of cyclic spaces
\begin{align}\label{iso-cyc-fun}
\begin{array}{ccc}
I_{\bullet}^{H, G}(X) &\longrightarrow & CN_{\bullet}(H\times X),\\
\downarrow&   &\downarrow\\
Y^{X}_{\bullet} &\longrightarrow & CN_{\bullet}(X\times_{Y}\hspace{-0.2em}X),
\end{array}
\end{align}
where the left vertical map is the characteristic map (\ref{char-on-points}), the right vertical map is \eqref{map-cyc-ner}, the top horizontal arrow is (\ref{I-CN}) and the bottom horizontal arrow is (\ref{AS-CN}).
\end{proposition} 
\begin{proof}
Under the aforementioned internal functor  inducing a morphism of cyclic spaces between corresponding cyclic nerves, a cycle 
\begin{align}
(c_{0},\ldots, c_{n})=((h_{0},x_{0}),\ldots, (h_{n},x_{n})),
\end{align}
in the cyclic nerve of $\mb{C}=H\times X$, where we have 
\begin{align}\label{act-cyc}
x_{i+1}=x_{i}h_{i}
\end{align} in the cyclic order, maps into a cycle
\begin{align}
((x_{0},x_{1}),\ldots, (x_{n-1},x_{n}), (x_{n}, x_{0})).
\end{align}
Therefore, by (\ref{iso-cyc-pair}), (\ref{iso-cyc-act}) and (\ref{char-on-points}), commutativity of (\ref{iso-cyc-fun}) reads as the identity
\begin{align}
(x_{1},x_{1}h_{1}, x_{1}h_{1}h_{2},\ldots, x_{1}h_{1}\cdots h_{n-1}, x_{1}h_{1}\cdots h_{n})=(x_{1},\ldots, x_{n}, x_{0})
\end{align}
which holds by (\ref{act-cyc}) and the fact that $x_{0}h_{0}\cdots h_{n}=x_{0}$.
\end{proof}
\begin{corollary}
The commutative square \eqref{iso-cyc-fun} is an isomorphism between the characteristic map \eqref{char-on-points} and the map \eqref{map-cyc-ner} of internal cyclic nerves induced by the internal functor \eqref{int-fun}.
\end{corollary}
\begin{proof}
Since the horizontal maps in the commutative square \eqref{iso-cyc-fun} are isomorphisms, it provides an isomorphism of the vertical arrows.
\end{proof}
\begin{corollary}
Provided the action of a subgroup $H$ on $X$ is free and $X\rightarrow Y=X/H$ is a quotient map, the characteristic map is an isomorpism of cyclic spaces.
\end{corollary}
\begin{proof}
Under the assumptions the functor $\mb{C}=H\times X\rightarrow X\times_{Y}\hspace{-0.2em}X$ is an equivalence, hence it induces an isomorphism of corresponding cyclic nerves. By commutativity of (\ref{iso-cyc-fun}), with three arrows being isomorphisms of  cyclic spaces, the remaining left vertical arrow, the characteristic map, must be an isomorphism as well.
\end{proof}
\section{The Hopf-cyclic quantization}\label{Extensions and Cyclic Homology}
In this section we recall the material we shall use in the sequel. We begin with the notion of entwined coalgebra-Galois extension, which shall be followed by the Hopf-cyclic homology of corings, with coefficients. Finally, we shall recall the cyclic homology of algebra extensions.
\subsection{Module coalgebra-equivariant algebra extensions}
We recall the definition and basic properties of different coalgebra-equivariant extensions of algebras from \cite{BrzeHaja09}.
\subsubsection{Entwined algebra extensions}\label{Entwinings} Given a co-augmented coalgebra $C$, i.e. equipped with a group-like element $e\in C$, one can consider a kind of symmetry imposed on an algebra $A$ by an entwining $\psi: C\otimes A\longrightarrow A\otimes C$. This means that $\psi$ satisfies the identities tantamount to a choice of a \emph{mixed distributive law} between the comonad $C\ot (-)$ and the monad $A\ot (-)$ on the category of vector spaces, whose (co)monad structure comes from the (co)algebra structure. It is well known \cite{BrzeHaja09} that the map
\[A\rightarrow A\ot C,\ \ \ a\mapsto \psi(e\ot a)\]
makes $A$ a right $C$-comodule. The compatibility of that $C$-coaction and the multiplication and the unit of $A$ is encoded by $\psi$. In the consequence, the subspace 
\[B=A^{{\rm co}\hspace{0.125em} C}:=\{ b\in A \mid\ \psi(e\ot b)=b\ot e\} \]
of $A$ is a subalgebra, called \emph{algebra of invariants}. In this context,  $A$ is called an \emph{entwined $C$-algebra extension} of $B$.
\subsubsection{Module coalgebra-algebra extensions}\label{Module coalgebra-algebra extensions} It is the main class of extensions we are interested in. They require two kinds of symmetry, the Hopf algebra action and coaction, used for defining the entwining between a module coalgebra and a comodule algebra. The presence of the group-like element of the coalgebra, allows one to relate the Hopf algebra and the module coalgebra by a module coalgebra map under which their coactions on the algebra extensions become compatible.   Moreover, the Hopf algebra-type symmetry is necessary for the Hopf-cyclic quantization of the classical  characteristic map of cyclic spaces. Module coalgebra-algebra extensions are defined as follows. If  $A$ is  a right $H$-comodule algebra with the coaction
\[
A\lra A\ot \C{H},\ \ \ a\mapsto a\ps{0}\ot a\ps{1}
\]
and $C$ be a right  $\C{H}$-module coalgebra with a group-like element $e \in C$, such that, the induced coaction
\[
A\lra A\ot C,\ \ \ a\mapsto a\ns{0}\ot a\ns{1}:= a\ps{0}\ot e a\ps{1}
\]
has the subalgebra of invariants $B=A^{{\rm co}\ \!C}$, then we say that $A$ is a \emph{module coalgebra-algebra extension} of $B$. It is well known that it is a special case of an entwined algebra extension with the entwining
\begin{align}\label{stand-entw}
\psi: C\otimes A\longrightarrow A\otimes C, \ \ c\otimes a\mapsto 
a\ps{0}\ot ca\ps{1}.
\end{align}
If a right 
$\mathcal{H}$-module coalgebra $C$ and a right 
$\mathcal{H}$-comodule algebra $A$ are entwined by \eqref{stand-entw} we call them 
\emph{$\mathcal{H}$-entwined}.
\subsubsection{Quotient module coalgebra-comodule algebra extensions} An interesting, due to the geometric examples it covers, class of coalgebra-entwined algebra extensions are the \emph{quotient module coalgebra-comodule algebra extensions}. In this setting, one lets $\C{H}$ to be a Hopf algebra, $\C{I} \subseteq \C{H}$ a coideal right ideal, and $A$ a right $H$-comodule algebra via $\nb:A \to A \ot \C{H}$. Then the composition
\begin{equation*}
\xymatrix{
\rho: A \ar[r]^{\nb}& A \ot \C{H} \ar[r]^{A \ot \pi\,\,\,\,\,\,} & A \ot \C{H}/\C{I},
}
\end{equation*}
where $\pi$ denotes the canonical projection
\begin{equation*}
\pi: \C{H}\rightarrow \C{H}/\C{I},\ \ \ \mb{h} \mapsto  \wbar{\mb{h}},
\end{equation*}
determines a right $\C{H}/\C{I}$-coaction on $A$. Finally, a $\C{H}/\C{I}$-entwined algebra extension $A^{{\rm co}\,\C{H}/\C{I}} \subseteq A$ is called  \emph{quotient coalgebra-entwined extension}. 

\subsubsection{Coalgebra-Galois extensions}\label{Coalgebra-Galois extensions} 
An $C$-entwined algebra extension $B\subseteq A$ 
is said to be \emph{Galois} if the left $A$-linear right $C$-colinear map
\begin{equation}\label{aux-canonical-map}
can:A \ot_B A \lra A \ot C, \quad a \ot_B a' \mapsto 
a\psi(e\ot a')
\end{equation}
is bijective. The map \eqref{aux-canonical-map} is called the \emph{canonical map} of the entwined $C$-extension. If the canonical map for an entwined pair $(C, A, \psi)$ invertible, we also say that  $(C, A, \psi)$ satisfies \emph{the Galois condition}.

An interesting consequence of a entwined $C$-extension being Galois  is the fact that the entwining $\psi$ is uniquely determined by the $C$-comodule structure of $A$ by the explicit formula \cite{BrzeHaja09}
\[\psi(c\ot a)=can(can^{-1}(1\ot c)a).\]
Therefore, under Galois condition making sense for merely a coalgebra $C$ coaction on an algebra $A$, ``entwined'' is superfluous, hence the name \emph{$C$-Galois extension} used in \cite{BrzeHaja09}. However, since we go beyond the Galois case, we started with the most general notion of \emph{entwined $C$-algebra extension}, embracing all cases of interest. 

In the case when $\C{I} = 0$, the quotient coalgebra-Galois extensions recover the \emph{Hopf-Galois extensions}, and in the case when $A = \C{H}$ they are called \emph{homogeneous coalgebra-Galois extensions}. In particular, viewing $\C{H}$ as a right $\C{H}$-comodule algebra via its comultiplication, one obtains the homogeneous $\C{H}/\C{I}$-Galois extensions 
\begin{equation*}
\xymatrix{
\rho: \C{H} \ar[r]^{\D}& \C{H} \ot \C{H} \ar[r]^{\Id \ot \pi\,\,\,\,\,\,} & \C{H} \ot \C{H}/\C{I}, & \mb{h} \mapsto \mb{h}\ps{1} \ot \wbar{\mb{h}\ps{2}}.
}
\end{equation*}
Such Galois extensions correspond to the quantum homogeneous spaces. Concrete examples include Manin's plane and Podle\'s spheres \cite{Brze96}, as well as a quantum spherical fibration of $SU_q(2)$ \cite{Brze97}. The quantum instanton bundle introduced in \cite{BoneCiccTarl02} is also a quotient coalgebra-Galois extension, \cite{BoneCiccDabrTarl04}. The latter example requires the full generality of the quotient coalgebra-Galois extensions: $A \neq \C{H}$ and $\C{I} \neq 0$.

The following lemma shows that also ``quotient'' is superfluous in the notion of \emph{quotient module coalgebra-Galois algebra extension}  since  for module coalgebra-algebra extensions it follows from the Galois condition.

\begin{lemma} Let $\C{H}$ be a Hopf algebra, and $B\subseteq A$ be a module coalgebra-Galois algebra extension for an $\C{H}$-module coalgebra $C$ coaugmented with a group-like $e\in C$. Then, the right $\C{H}$-module coalgebra map 
$$\pi : \C{H}\rightarrow C,\ \ \ \ \mb{h}\mapsto e\mb{h}$$ is surjective, hence makes $C$ a quotient coaugmented right $\C{H}$-module coalgebra. Moreover, in this case, $B\subseteq A$ is a quotient module coalgebra-Galois extension.
\end{lemma}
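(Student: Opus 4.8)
The plan is to verify the structural claims about $\pi$ formally, and then to extract surjectivity from the Galois condition by tracking where the induced $C$-coaction takes its values. First I would check that $\pi:H\to C$, $h\mapsto eh$, is a morphism of coaugmented right $H$-module coalgebras. Since $e$ is group-like one has $\Delta(eh)=(eh\ps{1})\ot(eh\ps{2})$ and $\varepsilon_C(eh)=\varepsilon_C(e)\varepsilon_H(h)=\varepsilon_H(h)$, so $\pi$ is a coalgebra map; it is right $H$-linear because $e(hh')=(eh)h'$, and it is coaugmented because $\pi(1)=e$. In particular $\ker\pi$ is automatically a coideal right ideal of $H$, which is what we will need for the final identification.

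The heart of the argument is the surjectivity of $\pi$, i.e.\ the equality $eH=C$, and here I would use the observation that the induced coaction never leaves $A\ot eH$: indeed $\rho(a)=a\ps{0}\ot ea\ps{1}\in A\ot eH$ for every $a\in A$. Consequently every value $can(a\ot_B a')=aa'\ps{0}\ot ea'\ps{1}$ of the canonical map \eqref{aux-canonical-map} has its $C$-leg in $eH$, so $\Im(can)\subseteq A\ot eH\subseteq A\ot C$.

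Now I bring in the Galois hypothesis. Bijectivity of $can$ forces $A\ot C=\Im(can)\subseteq A\ot eH\subseteq A\ot C$, hence $A\ot eH=A\ot C$ as subspaces of $A\ot C$. Tensoring the inclusion $eH\subseteq C$ with $A$ over the ground field is exact and $A\neq 0$ (it is unital), so from $A\ot(C/eH)=(A\ot C)/(A\ot eH)=0$ and $A\neq 0$ we get $C/eH=0$, that is $eH=C$ and $\pi$ is surjective. This makes $C$ a quotient coaugmented right $H$-module coalgebra. Setting $I:=\ker\pi$, which by the first step is a coideal right ideal, $\pi$ descends to an isomorphism $C\cong H/I$ of coaugmented right $H$-module coalgebras, under which $e$ corresponds to $\overline{1}$; the induced coaction then reads $\rho=(\Id\ot\pi)\circ\nb$, $a\mapsto a\ps{0}\ot\overline{a\ps{1}}$, which is exactly the coaction defining a quotient coalgebra-Galois extension. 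Since $B=A^{{\rm co}\,C}=A^{{\rm co}\,H/I}$ and the canonical map is unchanged by the identification, $B\subseteq A$ is a quotient coalgebra-Galois extension.

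I do not expect a serious obstacle: the only substantive point is recognizing that the $C$-valued leg of the canonical map is confined to $eH$, after which surjectivity is immediate from the flatness of $A\ot-$ over the field together with $A\neq 0$; the remaining verifications (that $\pi$ is a coaugmented module-coalgebra map and that $\ker\pi$ is a coideal right ideal) are formal consequences of $e$ being group-like and of $\pi$ being right $H$-linear.
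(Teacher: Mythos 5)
Your proposal is correct and follows essentially the same route as the paper: both arguments observe that the Galois condition forces $A\ot eH=A\ot C$ (you phrase this via $\Im(can)\subseteq A\ot eH$, the paper via a commuting square showing $A\ot\pi$ is surjective) and then use flatness/faithfulness of $A$ over the ground field to strip off the tensor factor and conclude $eH=C$. The additional formal verifications you include (that $\pi$ is a coaugmented right $H$-module coalgebra map with coideal right ideal kernel) are correct and harmless.
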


\begin{proof} Consider a diagram commuting by virtue of the definition of a module coalgebra-comodule algebra extension
$$\xymatrix{
A\ot A \ar@{->>}[d] \ar[r]^{\beta} &A\ot \C{H}\ar[d]^{A\otimes \pi}\\
A\ot_B A \ar[r]^{can} &A\ot C.}$$
Since the left vertical arrow (the balancing the tensor product over $B$) is surjective  and the canonical map is bijective, the map $A\ot \pi$ is surjective as well. Since  $A$  is (faithfully) flat over the base field we have  $\ker (A\ot \pi)=A\ot \ker(\pi)$ and hence by exactness of the short sequence 
$$\xymatrix{
0\ar[r] & A\ot \ker(\pi)\ar[r] & A\ot \C{H}\ar[r]^{A\ot \pi}& A\ot C\ar[r]& 0}$$
and faithfull flatness of $A$ the map $\pi$ itself has to be  surjective.   
\end{proof}

Given a $C$-Galois extension $B\subseteq A$, there is also an $A$-coring structure on $A\ot C$, called the Galois-coring of the extension, whose underlying $A$-bimodule structure reads as
\[
a'(a\ot c)=a'a\ot c,\ \ \ (a\ot c)a'=aa'\ps{0}\ot ca'\ps{1}.
\]
This can be understood as the coincidence of the canonical entwining  
\[
c\ot a \mapsto can(can^{-1}(1\ot c)a)
\]
of  the $C$-Galois extension $A$, with the 
Hopf-entwining 
\[
c\ot a \mapsto a\ps{0}\ot c a\ps{1}
\]
for a right $\C{H}$-module coalgebra and a right $\C{H}$-comodule algebra $A$, \cite{BrzeHaja09,BrzeWisb-book}.

The fact that the canonical map (\ref{aux-canonical-map})
is an isomorphism of corings will play a  crucial role later on. Note that $A\ot_B A$, called the \emph{Sweedler coring},  encodes the relative geometry of a noncommutative fibration in terms of the descent data, while $A\ot C$, called   the \emph{Galois coring}, encodes the action of the symmetry given in terms of the coaction of the coalgebra~\cite{Caen04}.
 
Note that in the Hopf-entwined case the canonical map \eqref{aux-canonical-map} reads as
\begin{align}\label{entw-can}
can(a\ot_{B}a')=aa'\ps{0}\ot ea'\ps{1}
\end{align}
The main aim of the present paper is to use this map to cook up a  morphism of cyclic objects providing invariants of the relative topology of a 
symmetry-invariant fibration, on the \emph{topological} side, and invariants of the action of symmetry, on the \emph{dynamical}  side.

\subsection{Hopf-cyclic homology of corings}\label{Hopf-cyclic homology of corings}

We now recall the cyclic homology of a module coring, with coefficients in a stable-anti-Yetter-Drinfeld (SAYD) module, under the symmetry of a right $\times$-Hopf algebroid from \cite{BohmStef08,HassRang13}.

Let $A$ and $\bm{H}$ be two algebras. Let also $s:A\to \bm{H}$ and $t:A^{\rm op}\to \bm{H}$ be two algebra maps with commuting ranges. Then $\bm{H}$ is equipped with an $A$-bimodule structure by
\[
a \cdot \bm{h} \cdot a' = \bm{h} s(a')t(a)
\]
for any $a, a'\in A$ and any $\bm{h}\in \bm{H}$.  Let, in addition, $\bm{H}$ be an $A$-coring with the comultiplication $\D:\bm{H}\to \bm{H}\ot_{A} \bm{H}$, and the counit $\ve:\bm{H}\to A$. Then $(\bm{H},s,t,\D,\ve)$ is called a right $A$-bialgebroid if  for $\bm{h}, \bm{h}'\in \bm{H}$ and $a\in A$
\begin{itemize}
\item[(i)]  $\bm{h}\ps{1}\ot_A t(a)\bm{h}\ps{2}=s(a)\bm{h}\ps{1}\ot_A \bm{h}\ps{2}$,
\item[(ii)] $ \D(1_{\bm{H}})=1_{\bm{H}}\ot_A 1_{\bm{H}}, \quad \D(\bm{h}\bm{h}')=\bm{h}\ps{1}\bm{h}'\ps{1}\ot_A \bm{h}\ps{2}\bm{h}'\ps{2}$,
\item[(iii)] $ \ve(1_{\bm{H}})=1_A, \quad  \ve(\bm{h}\bm{h}')=\ve(s(\ve(\bm{h}))\bm{h}')$.
\end{itemize}
Finally, a right $A$-bialgebroid $\bm{H}=(\bm{H},s,t,\D,\ve)$ is said to be a right $\times_{A}$-Hopf algebra provided the map
\begin{equation*}
\nu: \bm{H}\ot_{A^{\rm op}}\bm{H}\lra \bm{H}\ot_{A}\bm{H}, \quad  \bm{h}\otimes_{A^{\rm op}}\bm{h}'\mapsto \bm{h}\bm{h}'\ps{1}\ot_{A}\bm{h}'\ps{2}
\end{equation*}
is bijective. As for the inverse, we use the notation
\begin{equation*}
\nu^{-1}(1\otimes_{A^{\rm op}}\bm{h})=\bm{h}^{-}\otimes_{A}\bm{h}^{+}
\end{equation*}
for any $\bm{h} \in \bm{H}$. For further information on bialgebroids and $\times$-Hopf algebroids we refer the reader to \cite{Bohm09,BrzeWisb-book,Scha98,Scha00}, and we recall here \cite[Ex. 2.3]{HassRang13}.
\begin{example}\label{example-env-alg-right-cross-Hopf-alg}{\rm
For an algebra $A$, its enveloping algebra $A^{\rm e}:=A\ot A^{\rm op}$ is a right $\times_A$-Hopf algebroid with the structure maps
\begin{align*}
& s: A\lra A^{\rm e}, \quad a\mapsto a\ot 1; \quad t: A^{\rm op}\lra A^{\rm e}, \quad a\mapsto 1\ot a,\\
& \D^{\rm e}:A^{\rm e}\lra A^{\rm e}\ot_A \hspace{-0.15em}  A^{\rm e}, \quad (a\ot \widetilde{a})\mapsto (1\ot \widetilde{a})\ot_A (a\ot 1),\\
&  \ve^{\rm e}:A^{\rm e}\lra A, \quad \ve^{\rm e}(a\ot \widetilde{a})=\widetilde{a}a,\\
& \nu((a\ot \widetilde{a})\ot_{A^{\rm op}} (a'\ot \widetilde{a}'))= (a\ot \widetilde{a}'\widetilde{a})\ot_A (a'\ot 1),\\
& \nu^{-1}((a\ot \widetilde{a})\ot_A (a'\ot \widetilde{a}'))= (a\widetilde{a}'\ot \widetilde{a})\ot_{A^{\rm op}} (a'\ot 1).
\end{align*}
}\end{example}
Now we recall SAYD modules over a right $\times_A$-Hopf algebroids from \cite{BohmStef08, HassRang13}. Let $\bm{H}$ be a right $\times_A$-Hopf algebroid, and $\bm{M}$ be a left $\bm{H}$-module and a right $\bm{H}$-comodule via $\nb:\bm{M} \to \bm{M} \ot_A \bm{H}$ defined by $\bm{m} \to \bm{m}\ns{0} \ot_A \bm{m}\ns{1}$. Then $\bm{M}$ is called an AYD module over $\bm{H}$ if the $A$-bimodule structures on $\bm{M}$, being a left module over a right $\times_A$-Hopf algebroid and a left comodule over an $A$-coring, coincide, \ie
\begin{equation}\label{aux-AYD-compatibility-cross-Hopf-I}
\bm{m} \cdot a = t(a)\rt \bm{m}, \quad a \cdot \bm{m} = \bm{m}\ns{0} \cdot \ve(s(a)\bm{m}\ns{1}) = s(a)\rt \bm{m},
\end{equation}
and
\begin{equation}\label{aux-AYD-compatibility-cross-Hopf-II}
\nb(\mb{h}\rt \bm{m})= {\mb{h}\ps{1}}^{+}\rt  \bm{m}\ns{0}\ot_{A} \mb{h}\ps{2} \bm{m}\ns{1} {\mb{h}\ps{1}}^{-}.
\end{equation}
Furthermore, $\bm{M}$ is called \emph{stable} if $\bm{m}\ns{1}\rt \bm{m}\ns{0}=\bm{m}$. The following example will be used later.
\begin{example}\label{example-SAYD-on-A-env}{\rm
Let $A$ be an algebra and $A^{\rm e}$ be the enveloping algebra of $A$ as the right $\times_A$-Hopf algebroid introduced in Example \ref{example-env-alg-right-cross-Hopf-alg}. Then $A$ is a left $A^{\rm e}$-module via
\[
A^{\rm e} \ot A \lra A, \quad (a'\ot \widetilde{a}') \rt a:= a'a\widetilde{a}',
\]
for all $a,a',a'' \in A$. Moreover, $A$ is a right $A^{\rm e}$-comodule via
\[
\nb :A \lra A \ot_A\hspace{-0.175em}  A^{\rm e}, \quad  a \mapsto  a\ns{0} \ot_A a\ns{1}:=1 \ot_A (a\ot 1).
\]
For any $a \in A$ we have
\[
a\ns{1} \rt a\ns{0} = (a\ot 1) \rt 1 = a,
\]
hence $A$ is stable. As for the AYD compatibility, for any $a, a', \widetilde{a}' \in A$ we have
\[
a\widetilde{a}' = (1\ot \widetilde{a}') \rt a =  t(\widetilde{a}') \rt a,
\]
as well as
\[
a\ns{0} \ve(s(a')a\ns{1}) = \ve((a'\ot 1)(a\ot 1))  = a'a = (a'\ot 1) \rt a =  s(a') \rt a.
\]
As a result, \eqref{aux-AYD-compatibility-cross-Hopf-I} is satisfied. Finally, 
\begin{align*}
 \nb((a'\ot \widetilde{a}') \rt a) & = 1 \ot_A (a'a\widetilde{a}'\ot 1) =
 (1\ot 1) \rt 1 \ot_A (a'\ot 1)(a\ot 1)(\widetilde{a}'\ot 1)\\
& = (1\ot \widetilde{a}')^+ \rt 1 \ot_A (a'\ot 1)(a\ot 1)(1,\widetilde{a}')^- \\
& = {(a'\ot \widetilde{a}')\ps{1}}^+ \rt a\ns{0} \ot_A (a'\ot \widetilde{a}')\ps{2}a\ns{1}{(a'\ot \widetilde{a}')\ps{1}}^- .
\end{align*}
Therefore \eqref{aux-AYD-compatibility-cross-Hopf-II} is also satisfied, and we conclude that $A$ is a left-right SAYD module over the right $\times_A$-Hopf algebroid $A^{\rm e}$.
}\end{example}

Let us next recall the cyclic homology of module corings with SAYD coefficients from \cite{BohmStef08,HassRang13}. Let $\bm{H}$ be a right $\times_A$-Hopf algebroid, $\bm{C}$ an $A$-coring as well as a right $\bm{H}$-module coring, \ie $\bm{C}$ is a right $\bm{H}$-module and for any $\bm{c} \in \bm{C}$ and $\bm{h} \in \bm{H}$,
\begin{itemize}
\item[(i)] $\ve_{\bm{C}}(\bm{c} \lt \bm{h}) = \ve_{\bm{C}}(\bm{c}) \lt \bm{h} =  \ve_{\bm{H}}(s(\ve_{\bm{C}}(\bm{c}))\bm{h})$,
\item[(ii)] $\D_{\bm{C}}(\bm{c} \lt \bm{h}) = \D_{\bm{C}}(\bm{c}) \lt \bm{h} = \bm{c}\ps{1}\lt \bm{h}\ps{1} \ot_A \bm{c}\ps{2}\lt \bm{h}\ps{2}$.
\end{itemize}
Let also $\bm{M}$ be a left-right SAYD module over $\bm{H}$. Then,
\begin{equation}\label{aux-cyclic-module-FY}
{\rm C}^{\bm{H}}_n(\bm{C}, \bm{M}) := \bm{C}^{\ot_A \, n+1} \ot_{\bm{H}} \bm{M}, \quad n \geq 0
\end{equation}
is a cyclic module by the operators
\begin{align}\label{cyclic-Y-face}
\begin{split}
 d_i:{\rm C}^{\bm{H}}_{n+1}(\bm{C}, \bm{M}) &\to {\rm C}^{\bm{H}}_n(\bm{C}, \bm{M}), \quad 0 \leq i \leq n+1 
 \\
 \left(\bm{c}^0 \ot_A \ldots \ot_A \bm{c}^{n+1}\right) \ot_{\bm{H}} \bm{m} &\mapsto  \left(\bm{c}^0 \ot_A \ldots \ot_A \ve(\bm{c}^i) \ot_A \ldots \ot_A \bm{c}^{n+1}\right) \ot_{\bm{H}} \bm{m},
\end{split}
\end{align}
\begin{align}\label{cyclic-Y-degeneracy}
\begin{split}
 s_i:{\rm C}^{\bm{H}}_{n-1}(\bm{C}, \bm{M}) &\to {\rm C}^{\bm{H}}_n(\bm{C}, \bm{M}), \quad 0 \leq i \leq n-1 \\
 \left(\bm{c}^0 \ot_A \ldots \ot_A \bm{c}^{n-1}\right) \ot_{\bm{H}} \bm{m}  &\mapsto \left(\bm{c}^0 \ot_A \ldots \ot_A \bm{c}^i\ps{1} \ot_A \bm{c}^i\ps{2} \ot_A \ldots \ot_A \bm{c}^{n-1}\right) \ot_{\bm{H}} \bm{m},
\end{split}
\end{align}
and
\begin{align}\label{cyclic-Y-cyclic}
 t_{n}:{\rm C}^{\bm{H}}_n(&\bm{C}, \bm{M}) \to  {\rm C}^{\bm{H}}_n(\bm{C}, \bm{M}), \\
\left(\bm{c}^0 \ot_A \ldots \ot_A \bm{c}^n\right) \ot_{\bm{H}} \bm{m} \mapsto &\left(\bm{c}^n \lt \bm{m}\ns{1} \ot_A \bm{c}^0 \ot_A \ldots \ot_A \bm{c}^{n-1}\right) \ot_{\bm{H}} \bm{m}\ns{0}.\nonumber
\end{align}

The cyclic (resp. periodic cyclic, negative cyclic, Hochschild) homology of the cyclic object \eqref{aux-cyclic-module-FY} is called the \emph{$\times_A$-Hopf-cyclic (resp. periodic cyclic, negative cyclic, Hochschild) homology} of the $\bm{H}$-module coring $\bm{C}$ with coefficients in the SAYD module $\bm{M}$ under the symmetry of the right $\times_A$-Hopf algebra $\bm{H}$, and it is denoted by ${\rm HC}^{\bm{H}}_{\bullet}(\bm{C}, \bm{M})$ (resp.  ${\rm HP}^{\bm{H}}_{\bullet}(\bm{C}, \bm{M})$,  ${\rm HN}^{\bm{H}}_{\bullet}(\bm{C}, \bm{M})$,  ${\rm HH}^{\bm{H}}_{\bullet}(\bm{C}, \bm{M})$).

If the algebra $A$ is the ground field $k$, then it is well known that the above notion of $\times_{A}$-Hopf algebroid reduces to the notion of Hopf algebra 
$\mathcal{H}$ where  invertibility of $\nu$ for 
$\bm{H}$   produces the antipode $S$ of 
$\mathcal{H}$ by the formula $
S(\mb{h})=\mb{h}^-\varepsilon(\mb{h}^+)$,
and the antipode provides the inverse to $\nu$ given by
$\nu^{-1}(\mb{h}\ot \mb{h}')=\mb{h}S(\mb{h}'\!\!\ps{1})\ot \mb{h}'\!\!\ps{2}$.
Then the above notion of $\times_A$-Hopf-cyclic object \eqref{aux-cyclic-module-FY} reduces to the notion of \emph{Hopf-cyclic object} for a Hopf algebra $\mathcal{H}$, an $\mathcal{H}$-module coalgebra $C$ and a SAYD module $M$ for 
$\mathcal{H}$ \cite{HajaKhalRangSomm04-II}. 
We  denote the corresponding cyclic homology  by ${\rm HC}^{\mathcal{H}}_{\bullet}(C, M)$ (resp.  ${\rm HP}^{\mathcal{H}}_{\bullet}(C, M)$,  ${\rm HN}^{\mathcal{H}}_{\bullet}(C, M)$,  ${\rm HH}^{\mathcal{H}}_{\bullet}(C, M)$). 

\subsection{Cyclic homology of algebra extensions}\label{Cyclic homology of algebra extensions}

We recall the relative cyclic homology of algebras from \cite{JaraStef06}. To this end, we first recall the cyclic tensor product from \cite{Quil89}. The cyclic tensor product of $B$-bimodules $M_1,\ldots,M_n$ is defined by
\begin{equation*}
M_1 \widehat{\ot}_B \ldots \widehat{\ot}_B M_n := (M_1\hspace{-0.175em} \ot_B \ldots \ot_B\hspace{-0.175em} M_n) \ot_{B^{\rm e}}\hspace{-0.175em} B.
\end{equation*}
As it is remarked in \cite{JaraStef06}, for any $B$-bimodule $M$ one has $M \ot_{B^e}B \cong M_B$, where $M_B = M/[M,B]$ and $[M,B]$ is the subspace of $M$ generated by the commutators, that is, the elements of the form $[m,b]:=m\cdot b-b\cdot m$.

The relative cyclic homology of an algebra extension $B \subseteq A$ is computed by the cyclic object ${\rm C}_{\bullet}(A\hspace{0.2em}|\hspace{0.1em}B)$, where 
\begin{equation}\label{aux-rel-cyclic-alg}
 {\rm C}_n(A\hspace{0.2em}|\hspace{0.1em}B) := A^{\widehat{\ot}_B \, n+1}\cong A^{\ot_B n+1}\ot_{B^{\rm e}}\hspace{-0.175em}B\cong  A^{\ot_B n+1}/[B,  A^{\ot_B n+1}] 
\end{equation}
 is a $B$-balanced cyclic tensor product, equipped with the morphisms
\begin{align}\label{aux-faces-Z-n(A/B, A)}
 d_i:{\rm C}_n(A\hspace{0.2em}|\hspace{0.1em}B) &\longrightarrow {\rm C}_{n-1}(A\hspace{0.2em}|\hspace{0.1em}B), \quad 0 \leq j \leq n, \\\notag
 a^0 \widehat{\ot}_B a^1 \widehat{\ot}_B \cdots \widehat{\ot}_B  a^n &\mapsto
 \left\{\begin{array}{cc}
         a^0 a^1 \widehat{\ot}_B \cdots \widehat{\ot}_B a^n, & i =0, \\
         a^0 \widehat{\ot}_B a^1 \widehat{\ot}_B \cdots \widehat{\ot}_B a^ia^{i+1} \widehat{\ot}_B \cdots \widehat{\ot}_B a^n, & 1 \leq i \leq n-1, \\
         a^na^0 \widehat{\ot}_B a^1 \widehat{\ot}_B \cdots \widehat{\ot}_B a^{n-1} & i = n,
       \end{array}
\right.
\end{align}
\begin{align}\label{aux-degeneracies-Z-n(A/B, A)}
 s_j:{\rm C}_n(A\hspace{0.2em}|\hspace{0.1em}B) &\longrightarrow {\rm C}_{n+1}(A\hspace{0.2em}|\hspace{0.1em}B), \quad 0
\leq j \leq n, \\\notag
a^0 \widehat{\ot}_B \cdots \widehat{\ot}_B a^n &\mapsto a^0 \widehat{\ot}_B a^1 \widehat{\ot}_B \cdots \widehat{\ot}_B a^j \widehat{\ot}_B 1 \widehat{\ot}_B a^{j+1} \widehat{\ot}_B \cdots \widehat{\ot}_B a^n,
\end{align}
and
\begin{align}\label{aux-cyclic-Z-n(A/B, A)}
 t_n:{\rm C}_n(A\hspace{0.2em}|\hspace{0.1em}B) &\longrightarrow {\rm C}_n(A\hspace{0.2em}|\hspace{0.1em}B),\\\notag
a^0 \widehat{\ot}_B \cdots \widehat{\ot}_B a^n &\mapsto a^n \widehat{\ot}_B a^0 \widehat{\ot}_B \cdots \widehat{\ot}_B a^{n-1}.
\end{align}
The cyclic (resp. periodic cyclic, negative cyclic, Hochschild) homology of the cyclic object \eqref{aux-rel-cyclic-alg} is called the \emph{relative cyclic (resp. periodic cyclic, negative cyclic, Hochschild) homology} of the extension $B \subseteq A$, and it is denoted by ${\rm HC}_\bullet (A\hspace{0.2em}|\hspace{0.1em}B)$ (resp.  ${\rm HP}_\bullet (A\hspace{0.2em}|\hspace{0.1em}B)$,  ${\rm HN}_\bullet (A\hspace{0.2em}|\hspace{0.1em}B)$,  ${\rm HH}_\bullet (A\hspace{0.2em}|\hspace{0.1em}B)$).

\subsection{Cyclic homology of corings associated to the extension}\label{Cyclic homology of corings associated to the extension}

The cyclic module \eqref{aux-rel-cyclic-alg} can also be interpreted in terms of the cyclic module \eqref{aux-cyclic-module-FY} of the Sweedler coring $A \ot_B\hspace{-0.175em} A$ of the extension with coefficients in the left-right SAYD module $A$ over the right $\times_A$-Hopf algebra $A^{\rm e}$, see Example \ref{example-SAYD-on-A-env}, via the isomorphism
\begin{align}\label{alg-ext-Z-to-aux-iso-coring-FY}
{\rm C}_n(A\hspace{0.2em}|\hspace{0.1em}B) & \lra {\rm C}^{A^{\rm e}}_n(A \ot_B\hspace{-0.175em} A, A),\\
a^0 \widehat{\ot}_B \cdots \widehat{\ot}_B a^n & \mapsto \left( (a^0 \ot_B 1) \ot_A \cdots \ot_A (a^n \ot_B 1)\right) \ot_{A^{\rm e}}  1. \nonumber
\end{align}
which can be easily checked to be a well defined  inverse to an obviously well defined map
\begin{align}\label{aux-iso-coring-FY-to-alg-ext-Z}
{\rm C}^{A^{\rm e}}_n(A \ot_B\hspace{-0.175em} A, A)  \lra &\ {\rm C}_n(A\hspace{0.2em}|\hspace{0.1em}B),\\
\left( (a^0 \ot_B \widetilde{a}^{0}) \ot_A \cdots \ot_A (a^n \ot_B \widetilde{a}^{n})\right) \ot_{A^{\rm e}}  1  & \mapsto \widetilde{a}^{n}a^0 \widehat{\ot}_B \widetilde{a}^{0}a^1 \widehat{\ot}_B \cdots \widehat{\ot}_B \widetilde{a}^{n-1}a^n.\nonumber 
\end{align}
The cyclic structure of ${\rm C}^{A^{\rm e}}_{\bullet}(A \ot_B\hspace{-0.175em} A, A)$ is then given explicitly by
\begin{align*}
 d_i:{\rm C}^{A^{\rm e}}_n(A \ot_B\hspace{-0.175em} A, A) &\lra {\rm C}^{A^{\rm e}}_{n-1}(A \ot_B\hspace{-0.175em} A, A),\quad 0\leq i \leq n, \\
 \left( (a^0 \ot_B 1) \ot_A \ldots \ot_A (a^n \ot_B 1)\right) \ot_{A^{\rm e}} 1 &\mapsto  \\
&\hspace{-4.5em}\left( (a^0 \ot_B 1) \ot_A \ldots \ot_A \ve(a^i \ot_B 1) \ot_A \ldots \ot_A (a^n \ot_B 1)\right) \ot_{A^{\rm e}} 1, \\
 s_j:{\rm C}^{A^{\rm e}}_n(A \ot_B\hspace{-0.175em} A, A) &\lra {\rm C}^{A^{\rm e}}_{n+1}(A \ot_B\hspace{-0.175em} A, A), \quad 0\leq j \leq n,\\
 \left( (a^0 \ot_B 1) \ot_A \ldots \ot_A (a^n \ot_B 1)\right) \ot_{A^{\rm e}} 1 &\mapsto \\
&\hspace{-4.5em}\left( (a^0 \ot_B 1) \ot_A \ldots \ot_A \D(a^j \ot_B 1) \ot_A \ldots \ot_A (a^n \ot_B 1)\right) \ot_{A^{\rm e}} 1, \\
 t_n:{\rm C}^{A^{\rm e}}_n(A \ot_B\hspace{-0.175em} A, A) &\lra {\rm C}^{A^{\rm e}}_n(A \ot_B\hspace{-0.175em} A, A), \\
 \left( (a^0 \ot_B 1) \ot_A \ldots \ot_A (a^n \ot_B 1)\right) \ot_{A^{\rm e}} 1 &\mapsto \\
& \hspace{-1.5cm}\left(  (a^n \ot_B 1) \ot_A (a^0 \ot_B 1) \ot_A \ldots \ot_A (a^{n-1} \ot_B 1)\right) \ot_{A^{\rm e}} 1.
\end{align*}

Given a $C$-Galois extension $B\subseteq A$, there is also an $A$-coring structure on $A\ot C$, called the Galois-coring of the extension, whose underlying $A$-bimodule structure reads as
\[
a'(a\ot c)=a'a\ot c,\ \ \ (a\ot c)a'=aa'\ns{0}\ot ca'\ns{1}.
\]
This can be understood as the coincidence of the \emph{canonical entwining}  
\[
c\ot a \mapsto can(can^{-1}(1\ot c)a)
\]
(of  the $C$-Galois extension $A$) with the \emph{Hopf-entwining} 
\[
c\ot a \mapsto a\ns{0}\ot c a\ns{1}
\]
for a right $\C{H}$-module coalgebra $C$ and a right $\C{H}$-comodule algebra $A$, \cite{BrzeHaja09,BrzeWisb-book}.

\subsection{The cyclic object for the Galois coring.}

The cyclic object structure of the cyclic object construction applied to the Galois coring is explicitly given by 
\begin{align}
 d_i:{\rm C}^{A^{\rm e}}_n(A \ot C, A) &\lra {\rm C}_{n-1}(A \ot C, A),\quad 0\leq i \leq n, \\
\left((a^0 \ot c^0) \ot_A \ldots \ot_A (a^n \ot c^n)\right) \ot_{A^{\rm e}} 1 &\mapsto\nonumber  \\
&\hspace{-2.5em}\left( (a^0 \ot c^0) \ot_A \ldots \ot_A \ve(a^i \ot c^i) \ot_A \ldots \ot_A (a^n \ot c^n)\right) \ot_{A^{\rm e}}  1,\nonumber
\end{align} 

\begin{align}
 s_j:{\rm C}^{A^{\rm e}}_n(A \ot C, A) &\lra {\rm C}^{A^{\rm e}}_{n+1}(A \ot C, A), \quad 0\leq j \leq n,\\
\left((a^0 \ot c^0) \ot_A \ldots \ot_A (a^n \ot c^n)\right) \ot_{A^{\rm e}} 1 &\mapsto\nonumber \\
&\hspace{-2.5em}\left( (a^0 \ot c^0) \ot_A \ldots \ot_A \D(a^j \ot c^j) \ot_A \ldots \ot_A (a^n \ot c^n)\right) \ot_{A^{\rm e}}  1,\nonumber  
\end{align}
 
\begin{align}
 t_n:{\rm C}^{A^{\rm e}}_n(A \ot C,A) &\lra {\rm C}^{A^{\rm e}}_n(A \ot C, A), \\
\left((a^0 \ot c^0) \ot_A \ldots \ot_A (a^n \ot c^n)\right) \ot_{A^{\rm e}}  1 &\mapsto \nonumber \\
&\hspace{-2.5em} \left( (a^n \ot c^n) \ot_A (a^0 \ot c^0) \ot_A \ldots \ot_A (a^{n-1} \ot c^{n-1})\right) \ot_{A^{\rm e}}  1.\nonumber
\end{align}

\subsection{The canonical map of cyclic objects}
Tensoring the canonical map \eqref{aux-canonical-map} of corings, we obtain a canonical map of cyclic objects
\begin{align}\label{cyclic-mod-A-tensor-C}
 {\rm C}^{A^{\rm e}}_n(A \ot_B\hspace{-0.175em} A, A) &\lra {\rm C}^{A^{\rm e}}_n(A \ot C, A), \\ 
 \left((a^0 \ot_B 1) \ot_A \ldots \ot_A (a^n \ot_B 1)\right) \ot_{A^{\rm e}} 1 &\mapsto \left((a^0 \ot e) \ot_A \ldots \ot_A (a^n \ot e)\right) \ot_{A^{\rm e}} 1.\nonumber
\end{align}

In the $C$-Galois case it is an isomorphism of cyclic objects with the inverse given by   
\begin{align}\label{cyclic-mod-A-tensor-C-inverse}
 {\rm C}^{A^{\rm e}}_n(A \ot C, A) &\lra {\rm C}^{A^{\rm e}}_n(A \ot_B\hspace{-0.175em} A, A), \\ 
\left((a^0 \ot c^0) \ot_A \ldots \ot_A (a^n \ot c^n)\right) \ot_{A^{\rm e}} 1 &\mapsto ((a^0c^0\nsb{1} \ot_B c^0\nsb{2}) \ot_A \ldots \ot_A (a^nc^n\nsb{1} \ot_B c^n\nsb{2})) \ot_{A^{\rm e}} 1,\nonumber
\end{align}
where $\tau:C \to A\ot_B\hspace{-0.175em} A$, $c\mapsto c\nsb{1} \ot_B c\nsb{2}$ is the translation map, \ie $\tau(c)= can^{-1}(1\ot c)$, for any $c\in C$. 

Note that both maps are uniquely determined by formulas \eqref{cyclic-mod-A-tensor-C} and \eqref{cyclic-mod-A-tensor-C-inverse} since the elements on which they are evaluated in \eqref{cyclic-mod-A-tensor-C}-\eqref{cyclic-mod-A-tensor-C-inverse} span their domains.

It identifies the relative cyclic homology of a $C$-Galois extension $B\subseteq A$ with the Hopf-cyclic homology of the Galois-coring $A\ot C$, with coefficients in the SAYD module $A$ over the $\times_A$-Hopf algebra $A^{\rm e}$.  

\section{Hopf-cyclic homology for module coalgebra-extensions}\label{Hopf-cyclic homology for  module coalgebra-Galois comodule algebra extensions}
\subsection{The SAYD module associated with a comodule algebra}\label{The SAYD module associated with a comodule algebra} 
\begin{theorem}[\bf{The inertial SAYD module}]\label{thm-SAYD}
For any right $\C{H}$-comodule algebra $A$ the quotient vector space
\begin{align}\label{M}
M:=(\C{H}\ot A)/\langle \mb{h}a'\ps{1}\ot aa'\ps{0}-\mb{h}\ot a'a \mid \mb{h}\in \C{H},\,a,a' \in A \rangle .
\end{align}
is a stable anti-Yetter--Drinfeld $H$-module with the left $H$-module structure 
\begin{align}\label{mod}
\begin{split}
\C{H}\ot M&\rightarrow M,\\
 \mb{h}'\ot [\mb{h}\ot a]&\mapsto \mb{h}'\cdot [\mb{h}\ot a]:=[\mb{h}'\mb{h}\ot a],
\end{split}
\end{align} 
and the right $\C{H}$-comodule structure given by
\begin{align}\label{comod}
\begin{split} 
M&\rightarrow M\ot \C{H} ,\\
 [\mb{h}\ot a]&\mapsto  [\mb{h}\ot a]\ns{0}\ot  [\mb{h}\ot a]\ns{1}:=[\mb{h}\ps{2} \ot a\ps{0}]\ot \mb{h}\ps{3} a\ps{1}S(\mb{h}\ps{1}).
\end{split}
\end{align}
\end{theorem}

\begin{proof} 
It is obvious that the left $\C{H}$-module structure on $\C{H}\ot A$ is well defined by the formula \eqref{mod}. The left module structure on its quotient $M$ is well defined since
\[
\mb{h}'(\mb{h}a'\ps{1}\ot aa'\ps{0}-\mb{h}\ot a'a)=(\mb{h}'\mb{h})a'\ps{1}\ot aa'\ps{0}-(\mb{h}'\mb{h})\ot a'a
\]
which means that elements  $\mb{h}a'\ps{1}\ot aa'\ps{0}-\mb{h}\ot a'a$ span a left $\C{H}$-submodule in the left $\C{H}$-module $\C{H}\ot A$.

As for the well-definedness of the right comodule map $M\rightarrow M\ot \C{H}$ as defined above, let us first compute
\begin{align*}
&(\mb{h}a'\ps{1}\ot aa'\ps{0})\ns{0}\ot (\mb{h}a'\ps{1}\ot aa'\ps{0})\ns{1}\\
&=((\mb{h}a'\ps{1})\ps{2}\ot (aa'\ps{0})\ps{0})\ot (\mb{h}a'\ps{1})\ps{3}(aa'\ps{0})\ps{1}S( (\mb{h}a'\ps{1})\ps{1})\\
&=(\mb{h}\ps{2}a'\ps{1}\ps{2}\ot a\ps{0}a'\ps{0}\ps{0})\ot \mb{h}\ps{3}a'\ps{1}\ps{3}a\ps{1}a'\ps{0}\ps{1}S( \mb{h}\ps{1}a'\ps{1}\ps{1})\\
&=(\mb{h}\ps{2}a'\ps{3}\ot a\ps{0}a'\ps{0})\ot \mb{h}\ps{3}a'\ps{4}a\ps{1}a'\ps{1}S(a'\ps{2}) S( \mb{h}\ps{1})\\
&=(\mb{h}\ps{2}a'\ps{2}\ot a\ps{0}a'\ps{0})\ot \mb{h}\ps{3}a'\ps{3}a\ps{1}\varepsilon( a'\ps{1}) S( \mb{h}\ps{1})\\
&=(\mb{h}\ps{2}a'\ps{1}\ot a\ps{0}a'\ps{0})\ot \mb{h}\ps{3}a'\ps{2}a\ps{1} S( \mb{h}\ps{1})\\
&=(\mb{h}\ps{2}a'\ps{0}\ps{1}\ot a\ps{0}a'\ps{0}\ps{0})\ot \mb{h}\ps{3}a'\ps{1}a\ps{1} S( \mb{h}\ps{1})\\
&=(\mb{h}\ps{2}a'\ps{0}\ps{1}\ot a\ps{0}a'\ps{0}\ps{0}-\mb{h}\ps{2}\ot a'\ps{0}a\ps{0} )\ot \mb{h}\ps{3}a'\ps{1}a\ps{1} S( \mb{h}\ps{1})\\
&\ \ \ \ +(\mb{h}\ps{2}\ot a'\ps{0}a\ps{0} )\ot \mb{h}\ps{3}a'\ps{1}a\ps{1} S( \mb{h}\ps{1})
\end{align*}
and compare it with
\begin{align*}
& (\mb{h}\ot a'a)\ns{0}\ot  (\mb{h}\ot a'a)\ns{1}\\
&=(\mb{h}\ps{2} \ot (a'a)\ps{0})\ot \mb{h}\ps{3}(a' a)\ps{1}S(\mb{h}\ps{1})\\
&=(\mb{h}\ps{2} \ot a'\ps{0}a\ps{0})\ot \mb{h}\ps{3}a'\ps{1} a\ps{1}S(\mb{h}\ps{1}).
\end{align*}
The difference is
\begin{align*}
&(\mb{h}a'\ps{1}\ot aa'\ps{0}-\mb{h}\ot a'a)\ns{0}\ot (\mb{h}a'\ps{1}\ot aa'\ps{0}-h\ot a'a)\ns{1}\\
&=(\mb{h}\ps{2}a'\ps{0}\ps{1}\ot a\ps{0}a'\ps{0}\ps{0}-\mb{h}\ps{2}\ot a'\ps{0}a\ps{0} )\ot \mb{h}\ps{3}a'\ps{1}a\ps{1} S( \mb{h}\ps{1})
\end{align*}
which means that elements  $\mb{h}a'\ps{1}\ot aa'\ps{0}-\mb{h}\ot a'a$ span a right $\C{H}$-subcomodule in the right $\C{H}$-comodule $\C{H}\ot A$.

The formula \eqref{comod} defines a right $\C{H}$-comodule structure on $\C{H}\ot A$ since
\begin{align*}
& [\mb{h}\ot a]\ns{0}\ns{0}\ot  [\mb{h}\ot a]\ns{0}\ns{1}\ot [\mb{h}\ot a]\ns{1}\\
&=[\mb{h}\ps{2} \ot a\ps{0}]\ns{0}\ot [\mb{h}\ps{2} \ot a\ps{0}]\ns{1} \ot \mb{h}\ps{3} a\ps{1}S(\mb{h}\ps{1})\\
&=[\mb{h}\ps{2}\ps{2} \ot a\ps{0}\ps{0}]\ot \mb{h}\ps{2}\ps{3}a\ps{0}\ps{1} S(\mb{h}\ps{2}\ps{1})\ot \mb{h}\ps{3} a\ps{1}S(\mb{h}\ps{1})\\
&=[\mb{h}\ps{3} \ot a\ps{0}]\ot \mb{h}\ps{4}\ps{3}a\ps{1} S(\mb{h}\ps{2})\ot \mb{h}\ps{5} a\ps{2}S(\mb{h}\ps{1})\\
&=[\mb{h}\ps{2} \ot a\ps{0}]\ot \mb{h}\ps{3}\ps{1}a\ps{1} S(\mb{h}\ps{1}\ps{2})\ot h\ps{3}\ps{2} a\ps{2}S(\mb{h}\ps{1}\ps{1})\\
&=[\mb{h}\ps{2} \ot a\ps{0}]\ot \mb{h}\ps{3}\ps{1}a\ps{1}\ps{1} S(\mb{h}\ps{1})\ps{1}\ot \mb{h}\ps{3}\ps{2} a\ps{1}\ps{2}S(\mb{h}\ps{1})\ps{2}\\
&=[\mb{h}\ps{2} \ot a\ps{0}]\ot (\mb{h}\ps{3}a\ps{1} S(\mb{h}\ps{1}))\ps{1}\ot  (\mb{h}\ps{3}a\ps{1} S(\mb{h}\ps{1}))\ps{2}\\
&= [\mb{h}\ot a]\ns{0}\ot  [\mb{h}\ot a]\ns{1}\ps{1}\ot [\mb{h}\ot a]\ns{1}\ps{2}
\end{align*}
and
\begin{align*}
& [\mb{h}\ot a]\ns{0}\varepsilon([\mb{h}\ot a]\ns{1})=[\mb{h}\ps{2} \ot a\ps{0}]\varepsilon(\mb{h}\ps{3} a\ps{1}S(\mb{h}\ps{1}))\\
&=[\mb{h}\ps{2} \ot a\ps{0}]\varepsilon(\mb{h}\ps{3}) \varepsilon(a\ps{1})\varepsilon(S(\mb{h}\ps{1}))=\varepsilon(\mb{h}\ps{1})\mb{h}\ps{2}\varepsilon(\mb{h}\ps{3}) \ot a\ps{0} \varepsilon(a\ps{1})\\
&=[\mb{h}\ot a].
\end{align*}

Now we check the stability  condition. We have,
\begin{align*}
& [\mb{h}\ot a]\ns{1} [\mb{h}\ot a]\ns{0} = \mb{h}\ps{3} a\ps{1}S(\mb{h}\ps{1})[\mb{h}\ps{2} \ot a\ps{0}]\\
&= [\mb{h}\ps{3} a\ps{1}S(\mb{h}\ps{1})\mb{h}\ps{2} \ot a\ps{0}]=[\varepsilon(\mb{h}\ps{1}) \mb{h}\ps{2} a\ps{1} \ot a\ps{0}] \\
&= [\mb{h} a\ps{1} \ot a\ps{0}]= [\mb{h} a\ps{1} \ot 1\cdot a\ps{0} -\mb{h}  \ot a\cdot 1] + [\mb{h}\ot a]=[\mb{h}\ot a],
\end{align*}
as such, we see that the stability holds only in the level of the quotient $M$. Note that it does not hold on the level of $\C{H}\ot A$.

Finally, we check the AYD-condition. We have,
\begin{align*}
& (\mb{h}'\cdot [\mb{h}\ot a])\ns{0}\ot (\mb{h}' \cdot [\mb{h}\ot a])\ns{1}\\
&=[\mb{h}'\mb{h}\ot a]\ns{0}\ot [\mb{h}' \mb{h}\ot a]\ns{1}\\
&=[(\mb{h}'\mb{h})\ps{2} \ot a\ps{0}]\ot (\mb{h}'\mb{h})\ps{3} a\ps{1}S((\mb{h}'\mb{h})\ps{1})\\
&=[\mb{h}'\ps{2}\mb{h}\ps{2} \ot a\ps{0}]\ot \mb{h}'\ps{3}\mb{h}\ps{3} a\ps{1}S(\mb{h}'\ps{1}\mb{h}\ps{1})\\
&=[\mb{h}'\ps{2}\mb{h}\ps{2} \ot a\ps{0}]\ot \mb{h}'\ps{3}\mb{h}\ps{3} a\ps{1}S(\mb{h}\ps{1})S(\mb{h}'\ps{1})\\
&=\mb{h}'\ps{2} \cdot [\mb{h}\ps{2} \ot a\ps{0}]\ot \mb{h}'\ps{3}(\mb{h}\ps{3} a\ps{1}S(\mb{h}\ps{1}))S(\mb{h}'\ps{1})\\
&=\mb{h}'\ps{2}\cdot [\mb{h}\ot a]\ns{0}\ot \mb{h}'\ps{3}[\mb{h}\ot a]\ns{1}S(\mb{h}'\ps{1}).
\end{align*} 
\end{proof}
\noindent{\bf Remark.} Note that provided $\C{H}\ot A$ is equipped with the following $A$-bimodule structure
$$a'(\mb{h}\ot a):=\mb{h}\ot a'a,\ \ \ \ (\mb{h}\ot a)a':=\mb{h}a'_{(1)}\ot aa'_{(0)},$$
$M$ can be written equivalently as 
\begin{align}\label{M}
M=(\C{H}\ot A)/[A, \C{H}\ot A]=(\C{H}\ot A)\ot_{A^{\rm e}}A.
\end{align}
\subsection{The Hopf algebra change in inertial Hopf-cyclic homology}\label{?} In the classical case the inertial cyclic space depends only on the group $H$ and the choice of the group homomorphism $H\rightarrow G$ is irrelevant. In the noncommutative case, when $G$ corresponds to a Hopf algebra $\C{H}$ and the group homomorphism $H\rightarrow G$ to the structure of a right $\C{H}$-module coalgebra $C$, $C$ can be not a Hopf algebra, and the Hopf-cyclic object for $C$ doesn't make sense. However, the choice of a Hopf algebra  $\C{H}$ is again irrelevant, as far as another choice is appropriatly compatible.  The latter means that we  allow another Hopf algebra $\widetilde{\C{H}}$ coaction  on $A$ and a Hopf algebra map $\varphi: \widetilde{\C{H}}\rightarrow \C{H}$ making the following diagram commute.
\begin{equation}\label{diagram}
\begin{gathered}
\xymatrix@C=1.5cm{
                                 &A\ot \widetilde{\C{H}}\ar[d]^{A\otimes \varphi}\\
A\ar[r]^{\alpha}\ar[ur]^{\widetilde{\alpha}} &A\ot \C{H}.}
\end{gathered}
\end{equation}
To prove that irrelevance, we will use the three following lemmas. The first one is stated as Example 4.4 (c) in \cite{JaraStef06} without a proof which we provide for the sake of completeness.
 
\begin{lemma}[Hopf algebra change for SAYD modules]\label{bc-SAYD}
For any left-right SAYD module $\widetilde{M}$ over a Hopf algebra  $\widetilde{\C{H}}$ any Hopf algebra map  $\varphi: \widetilde{\C{H}}\rightarrow \C{H}$ induces  a structure of a left-right SAYD module over $\C{H}$ on    
$\C{H}\ot_{\widetilde{\C{H}}}\hspace{-0.175em}\widetilde{M}$.
\end{lemma}
\begin{proof}
The left $\C{H}$-module structure of $\C{H}\ot_{\widetilde{\C{H}}}\widetilde{M}$ is obvious
\begin{align}\label{bc-SAYD-mult}
\mb{h}'(\mb{h}\ot_{\widetilde{\C{H}}}\widetilde{m}):=\mb{h}'\mb{h}\ot_{\widetilde{\C{H}}}\widetilde{m}.
\end{align}
The right $\C{H}$-comodule structure of $\C{H}\ot_{\widetilde{\C{H}}}\widetilde{M}$ is defined as follows
\begin{align}\label{bc-SAYD-comult}
(\mb{h}\ot_{\widetilde{\C{H}}}\widetilde{m})_{(0)}\ot (\mb{h}\ot_{\widetilde{H}}\widetilde{m})_{(1)}:=(\mb{h}_{(2)}\ot_{\widetilde{\C{H}}}\widetilde{m}_{(0)})\ot \mb{h}_{(3)}\varphi(\widetilde{m}_{(1)})S(\mb{h}_{(1)}).
\end{align}
The counitality of $\C{H}\ot_{\widetilde{\C{H}}}\hspace{-0.175em}\widetilde{M}$ follows easily:
\begin{equation}
\begin{gathered}
(\mb{h}\ot_{\widetilde{\C{H}}}\widetilde{m}))_{(0)}\varepsilon((\mb{h}\ot_{\widetilde{\C{H}}}\widetilde{m}))_{(1)})= (\mb{h}_{(2)}\ot_{\widetilde{H}}\widetilde{m}_{(0)})\varepsilon( \mb{h}_{(3)}\varphi(\widetilde{m}_{(1)})S(\mb{h}_{(1)}))\\
= (\mb{h}_{(2)}\ot_{\widetilde{\C{H}}}\widetilde{m}_{(0)})\varepsilon( \mb{h}_{(3)})\varepsilon(\varphi(\widetilde{m}_{(1)}))\varepsilon(S(\mb{h}_{(1)})))
= (\mb{h}_{(2)}\ot_{\widetilde{\C{H}}}\widetilde{m}_{(0)})\varepsilon( \mb{h}_{(3)})\varepsilon(\widetilde{m}_{(1)})\varepsilon(\mb{h}_{(1)})\\
=  \varepsilon(\mb{h}_{(1)})\mb{h}_{(2)}\varepsilon( \mb{h}_{(3)})\ot_{\widetilde{\C{H}}}\widetilde{m}_{(0)}\varepsilon(\widetilde{m}_{(1)})=\mb{h}\ot_{\widetilde{\C{H}}}\widetilde{m}.
\end{gathered}
\end{equation}
The coassociativity of $\C{H}\ot_{\widetilde{\C{H}}}\hspace{-0.175em}\widetilde{M}$ is slightly more involved.
\begin{equation}
\begin{gathered}
(\mb{h}\ot_{\widetilde{\C{H}}}\widetilde{m}))_{(0)}\ot (\mb{h}\ot_{\widetilde{\C{H}}}\widetilde{m}))_{(1)(1)}\ot (\mb{h}\ot_{\widetilde{\C{H}}}\widetilde{m}))_{(1)(2)}\\
=(\mb{h}_{(2)}\ot_{\widetilde{\C{H}}}\widetilde{m}_{(0)})\ot 
 (\mb{h}_{(3)}\varphi(\widetilde{m}_{(1)})S(\mb{h}_{(1)}))_{(1)}\ot  (\mb{h}_{(3)}\varphi(\widetilde{m}_{(1)})S(\mb{h}_{(1)}))_{(2)}\\
=(\bm{h}_{(2)}\ot_{\widetilde{\C{H}}}\widetilde{m}_{(0)})\ot 
\mb{h}_{(3)(1)}\varphi(\widetilde{m}_{(1)})_{(1)}S(\mb{h}_{(1)})_{(1)}\ot  h_{(3)(2)
}\varphi(\widetilde{m}_{(1)})_{(2)}S(\mb{h}_{(1)})_{(2)}\\
=(\bm{h}_{(2)}\ot_{\widetilde{\C{H}}}\widetilde{m}_{(0)})\ot 
\mb{h}_{(3)(1)}\varphi(\widetilde{m}_{(1)(1)})S(\mb{h}_{(1)(2)})\ot  \mb{h}_{(3)(2)
}\varphi(\widetilde{m}_{(1)(2)})S(\mb{h}_{(1)(1)})\\
=(\bm{h}_{(2)}\ot_{\widetilde{\C{H}}}\widetilde{m}_{(0)})\ot 
\mb{h}_{(3)(1)}\varphi(\widetilde{m}_{(1)(1)})S(\mb{h}_{(1)(2)})\ot  \mb{h}_{(3)(2)
}\varphi(\widetilde{m}_{(1)(2)})S(\mb{h}_{(1)(1)})\\
=(\bm{h}_{(3)}\ot_{\widetilde{\C{H}}}\widetilde{m}_{(0)})\ot 
\mb{h}_{(4)}\varphi(\widetilde{m}_{(1)})S(\mb{h}_{(2)})\ot  \mb{h}_{(5)
}\varphi(\widetilde{m}_{(2)})S(\mb{h}_{(1)})\\ 
=(\bm{h}_{(2)(2)}\ot_{\widetilde{\C{H}}}\widetilde{m}_{(0)(0)})\ot 
\mb{h}_{(2)(3)}\varphi(\widetilde{m}_{(0)(1)})S(\mb{h}_{(2)(1)})\ot  \mb{h}_{(3)
}\varphi(\widetilde{m}_{(1)})_{(2)}S(\mb{h}_{(1)})\\
=(\mb{h}_{(2)}\ot_{\widetilde{\C{H}}}\widetilde{m}_{(0)})_{(0)}\ot 
(\mb{h}_{(2)}\varphi(\widetilde{m}_{(0)})_{(1)}\ot  \mb{h}_{(3)
}\varphi(\widetilde{m}_{(1)})S(\mb{h}_{(1)})\\
=(\mb{h}\ot_{\widetilde{\C{H}}}\widetilde{m})_{(0)(0)}\ot 
(\mb{h}\ot_{\widetilde{\C{H}}}\widetilde{m})_{(0)(1)}\ot  (\mb{h}\ot_{\widetilde{\C{H}}}\widetilde{m})_{(1)}.
\end{gathered}
\end{equation}
The stability of $\C{H}\ot_{\widetilde{\C{H}}}\hspace{-0.175em}\widetilde{M}$ follows from the  computation using (\ref{bc-SAYD-mult}),  (\ref{bc-SAYD-comult}) and the stability of $\widetilde{M}$.
\begin{equation}
\begin{gathered}
\ (\mb{h}\ot_{\widetilde{\C{H}}}\widetilde{m})_{(1)}(\mb{h}\ot_{\widetilde{\C{H}}}\widetilde{m})_{(0)}=\mb{h}_{(3)}\varphi(\widetilde{m}_{(1)})S(\mb{h}_{(1)})\mb{h}_{(2)}\ot_{\widetilde{\C{H}}}\widetilde{m}_{(0)}\\
 = \  \mb{h}_{(2)}\varphi(\widetilde{m}_{(1)})\varepsilon(\mb{h}_{(1)})\ot_{\widetilde{\C{H}}}\widetilde{m}_{(0)}=\varepsilon(\mb{h}_{(1)})\mb{h}_{(2)}\varphi(\widetilde{m}_{(1)})\ot_{\widetilde{\C{H}}}\widetilde{m}_{(0)}\\
 = \   \mb{h}\varphi(\widetilde{m}_{(1)})\ot_{\widetilde{\C{H}}}\widetilde{m}_{(0)}= \mb{h}\ot_{\widetilde{\C{H}}}\widetilde{m}_{(1)}\widetilde{m}_{(0)}=\mb{h}\ot_{\widetilde{\C{H}}}\widetilde{m}.
\end{gathered}
\end{equation}
The AYD property of $\C{H}\ot_{\widetilde{\C{H}}}\hspace{-0.175em}\widetilde{M}$ follows from the computation using (\ref{bc-SAYD-mult}),  (\ref{bc-SAYD-comult}) and the  AYD property of $\widetilde{M}$.
\begin{equation}
\begin{gathered}
(\mb{h}'(\mb{h}\ot_{\widetilde{\C{H}}}\widetilde{m}))_{(0)}\ot (\mb{h}'(\mb{h}\ot_{\widetilde{\C{H}}}\widetilde{m}))_{(1)}=(\mb{h}'\mb{h}\ot_{\widetilde{\C{H}}}\widetilde{m})_{(0)}\ot (\mb{h}'\mb{h}\ot_{\widetilde{\C{H}}}\widetilde{m})_{(1)}\\
=((\mb{h}'\mb{h})_{(2)}\ot_{\widetilde{\C{H}}}\widetilde{m}_{(0)})\ot (\mb{h}'\mb{h})_{(3)}\varphi(\widetilde{m}_{(1)})S((\mb{h}'\mb{h})_{(1)})\\
=(\mb{h}'_{(2)}\mb{h}_{(2)}\ot_{\widetilde{\C{H}}}\widetilde{m}_{(0)})\ot \mb{h}'_{(3)}\mb{h}_{(3)}\varphi(\widetilde{m}_{(1)})S(\mb{h}'_{(1)}\mb{h}_{(1)})\\
=\mb{h}'_{(2)}(\mb{h}_{(2)}\ot_{\widetilde{\C{H}}}\widetilde{m}_{(0)})\ot \mb{h}'_{(3)}(\mb{h}_{(3)}\varphi(\widetilde{m}_{(1)})S(\mb{h}_{(1)}))S(\mb{h}'_{(1)})\\
=\mb{h}'_{(2)}(\mb{h}\ot_{\widetilde{\C{H}}}\widetilde{m})_{(0)}\ot \mb{h}'_{(3)}(\mb{h}\ot_{\widetilde{\C{H}}}\widetilde{m})_{(1)}S(\mb{h}'_{(1)}).
\end{gathered}
\end{equation}
\end{proof}
\begin{lemma}[Hopf-algebra-change-invariance of  Hopf-cyclic objects]\label{bc-HC-obj} For any right $\C{H}$-module coalgebra $C$ 
the collection of maps 
\begin{equation}\label{bc-Hopf-cyclic}
\begin{gathered}
{\rm C}^{\widetilde{\C{H}}}_n(C, \widetilde{M})\rightarrow {\rm C}^{\C{H}}_n(C, \C{H}\ot_{\widetilde{\C{H}}}\hspace{-0.175em}\widetilde{M})
,\\
(c^0\ot\cdots\ot c^n)\ot_{\widetilde{\C{H}}}\widetilde{m}\mapsto (c^0\ot\cdots\ot c^n)\ot_{\C{H}}(1\ot_{\widetilde{H}}\widetilde{m})
\end{gathered}
\end{equation}
form an isomorphism of cyclic objects.
\end{lemma}
\begin{proof} Bijectivity of the above maps is obvious. 
What we have to prove is that the above maps commute with faces and degeneracies induced by the counit and the comultiplication of the coalgebra $C$, and the cyclic operator coming from the comodule  structures of $\widetilde{M}$ and $\C{H}\ot_{\widetilde{\C{H}}}\hspace{-0.175em}\widetilde{M}$. The fact of commuting with faces and codegeneracies follows obviously from the form (\ref{bc-Hopf-cyclic}) of the maps under consideration. Commutation with the cyclic operator reads as the identity
\begin{equation}\label{?}
\begin{gathered}
(c^n(1\ot_{\widetilde{\C{H}}}\widetilde{m})\ns{1}\ot c^0\cdots\ot c^{n-1})\ot_{\C{H}}(1\ot_{\widetilde{\C{H}}}\widetilde{m})\ns{0}\\
=  (c^n\varphi(\widetilde{m}\ns{1})\ot c^0\cdots\ot c^{n-1})\ot_{\C{H}}(1\ot_{\widetilde{\C{H}}}\widetilde{m}\ns{0})
\end{gathered}
\end{equation}
following immediately from the identity
 \begin{equation}\label{?}
\begin{gathered}
(1\ot_{\widetilde{\C{H}}}\widetilde{m})\ns{0}\ot (1\ot_{\widetilde{\C{H}}}\widetilde{m})\ns{1}
=  (1\ot_{\widetilde{\C{H}}}\widetilde{m}\ns{0})\ot  \varphi(\widetilde{m}\ns{1})
\end{gathered}
\end{equation}
which is a special case of (\ref{bc-SAYD-comult}) for $\mb{h}=1$.
\end{proof}
\begin{lemma}\label{nc-bry-iso}
In the situation of the commutative diagram (\ref{diagram}) and for $M$ (resp. $\widetilde{M}$) being an inertial SAYD module over $\C{H}$ (resp. $\widetilde{\C{H}}$) defined as in Theorem \ref{thm-SAYD} there is an isomorphism of  SAYD modules over $\C{H}$
$$\C{H}\ot_{\widetilde{\C{H}}}\hspace{-0.175em}\widetilde{M}\rightarrow M.$$
\end{lemma}
\begin{proof} For  SAYD modules $M$ over $\C{H}$  and $\widetilde{M}$ over 
$\widetilde{\C{H}}$ presented as 
$$M=(\C{H}\ot A)/[A, \C{H}\ot A],\ \ \ \widetilde{M}=(\widetilde{\C{H}}\ot A)/[A, \widetilde{\C{H}}\ot A]$$
 we define two maps:
\begin{equation}\label{alpha}
\begin{gathered}
\alpha: \C{H}\ot_{\widetilde{\C{H}}}\widetilde{M}\rightarrow M,\ \ \ \alpha(\mb{h}\ot_{\widetilde{\C{H}}}[\widetilde{\mb{h}}\ot a]):= [\mb{h}\varphi(\widetilde{\mb{h}})\ot a],\\
\omega: M\rightarrow \C{H}\ot_{\widetilde{\C{H}}}\widetilde{M},\ \ \ \ \omega([\mb{h}\ot a]):=\mb{h}\ot_{\widetilde{\C{H}}}[1\ot a].
\end{gathered}
\end{equation}
They both are well defined by the second description of $M$ as $(\C{H}\ot A)\ot_{A^{\rm e}}A$ and the fact that $\C{H}\ot A$ is an $(\C{H}, A^{\rm e})$-bimodule. They are inverse one to each other by balancedness of the tensor product over $\widetilde{\C{H}}$. It is obvious that $\alpha$ is a left $\C{H}$-module map. 

While proving that $\alpha$ is  also a right $\C{H}$-comodule map we can use surjectivity of $\omega$ to simplify the calculation by restricting it to   elements of the form $\mb{h}\ot_{\widetilde{\C{H}}}[1\ot a]$. Then we start from the identities (\ref{bc-SAYD-comult}) and next (\ref{comod}) to get 
\begin{equation}\label{comult-red}
\begin{gathered}
(\mb{h}\ot_{\widetilde{\C{H}}}[1\ot a])_{(0)}\ot (\mb{h}\ot_{\widetilde{\C{H}}}[1\ot a])_{(1)}=(\mb{h}_{(2)}\ot_{\widetilde{\C{H}}}[1\ot a]_{(0)})\ot \mb{h}_{(3)}\varphi([1\ot a]_{(1)})S(\mb{h}_{(1)})\\
=(\mb{h}_{(2)}\ot_{\widetilde{\C{H}}}[1\ot a_{(0)}])\ot \mb{h}_{(3)}\varphi(a_{(1)})S(\mb{h}_{(1)}).
\end{gathered}
\end{equation}
Finally, applying $\alpha\ot \C{H}$ to  (\ref{comult-red}) and using  (\ref{alpha})               we prove right $\C{H}$-colinearity of $\alpha$ as follows
\begin{equation}
\begin{gathered}
\alpha((\mb{h}\ot_{\widetilde{\C{H}}}[1\ot a])_{(0)})\ot (\mb{h}\ot_{\widetilde{\C{H}}}[1\ot a])_{(1)}=[\mb{h}_{(2)}\ot_{\widetilde{\C{H}}}a_{(0)}]\ot \mb{h}_{(3)}\varphi (a_{(1)})S(\mb{h}_{(1)})\\
=[\mb{h}\ot_{\widetilde{\C{H}}}a]_{(0)}\ot [\mb{h}\ot_{\widetilde{\C{H}}}a]_{(1)}=\alpha(\mb{h}\ot_{\widetilde{\C{H}}}[1\ot a])_{(0)}\ot \alpha(\mb{h}\ot_{\widetilde{\C{H}}}[1\ot a])_{(1)}.
\end{gathered}
\end{equation} 
\end{proof} 
The following corollary of  Lemma \ref{bc-SAYD},   Lemma \ref{bc-HC-obj} and Lemma \ref{nc-bry-iso}   is the main result of the present paragraph.
\begin{proposition}[Hopf-algebra-change-invariance of the inertial Hopf-cyclic object]\label{basechange}\hspace{1em} Under the assumptions of the Lemma \ref{nc-bry-iso}, for any right $\C{H}$-module coalgebra $C$, the collection of maps
\begin{equation}\label{Hopf-alg-change}
\begin{gathered}
{\rm C}^{\widetilde{\C{H}}}_n(C, \widetilde{M}) \rightarrow {\rm C}^{\C{H}}_n(C, M),\\
\left(c^0\ot\cdots\ot c^n\right)\ot_{\widetilde{\C{H}}}(1\ot a)\mapsto (c^0\ot\cdots\ot c^n)\ot_{\C{H}}(1\ot a)
\end{gathered}
\end{equation}
forms an isomorphism of cyclic objects.
\end{proposition}
\begin{proof}
We can decompose (\ref{Hopf-alg-change}) as follows.
\begin{align}\label{Hopf-alg-change-decomp}
{\rm C}^{\widetilde{\C{H}}}_n(C, \widetilde{M})&\rightarrow {\rm C}^{\C{H}}_n(C, \C{H}\ot_{\widetilde{\C{H}}}\hspace{-0.175em}\widetilde{M})
\rightarrow {\rm C}^{\C{H}}_n(C, M),\\
(c^0\ot\cdots\ot c^n)\ot_{\widetilde{\C{H}}}\hspace{-0.27em}(1\ot a)\mapsto  (c^0\ot & \cdots\ot c^n)\ot_{\C{H}}\hspace{-0.27em}\left(1\ot_{\widetilde{\C{H}}}(1\ot a)\right)
\mapsto (c^0\ot\cdots\ot c^n)\ot_{\C{H}}\hspace{-0.27em}(1\ot a).\nonumber
\end{align}
Since by Lemma \ref{bc-HC-obj}  the left hand side arrows form an isomorphism of cyclic objects, and the  right hand side arrows  do so by Lemma \ref{nc-bry-iso}, the composites form an isomorphism of cyclic objects as well. 
\end{proof} 
\subsection{The isomorphism of the coring and coalgebra Hopf-cyclic objects}\label{The isomorphism of the associated cyclic objects} Let  us assume that there is given a right $\C{H}$-module coalgebra $C$ action on a right $\C{H}$-comodule algebra $A$.

Now we show a canonical isomorphism between the Hopf-cyclic object for the\linebreak $C$-Galois coring and the Hopf-cyclic homology of the coalgebra $C$ with coefficients in the SAYD module of Theorem \ref{thm-SAYD}. To this end, let us first note the following lemma.

\begin{lemma}\label{A-right-to-left-balanced} 
Given a right module coalgebra $C$ coaction on a right comodule algebra $A$ as above, we have
\begin{align*}
& ((1 \ot c^0) \ot_A \cdots \ot_A (1  \ot c^n))\ot_{A^{\rm e}}a'a\\
& =((1 \ot c^0a'\ps{1}) \ot_A  (1  \ot c^{1}a'\ps{2})\ot\cdots \ot_A (1  \ot c^n a'\ps{n+1}))\ot_{A^{\rm e}}aa'\ps{0}.
\end{align*}
for any $a,a' \in A$, and any $c^0,\ldots,c^n \in C$.
\end{lemma}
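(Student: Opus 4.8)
The plan is to work inside the cyclic module $\FY_n(A\ot C,A^e,A)=(A\ot C)^{\ot_A\, n+1}\ot_{A^e}A$ and to establish the identity by transporting the algebra element $a'$ out of the $M=A$ slot and across the tensor legs from right to left, using only the two balancings (over $A$ and over $A^e$) together with coassociativity of the $H$-coaction. Three elementary moves form the toolkit: (M1) right multiplication of the last leg by $a'$ may be traded for left multiplication by $a'$ in the $A$-slot, $(\cdots\ot_A \mathfrak{c}^n\cdot a')\ot_{A^e}m=(\cdots\ot_A \mathfrak{c}^n)\ot_{A^e}a'm$; (M2) left multiplication of the first leg by $a$ may be traded for right multiplication by $a$ in the $A$-slot, $(a\cdot\mathfrak{c}^0\ot_A\cdots)\ot_{A^e}m=(\mathfrak{c}^0\ot_A\cdots)\ot_{A^e}ma$; and (M3) the defining relation of $\ot_A$, namely $\mathfrak{c}^{k-1}\ot_A(a\cdot\mathfrak{c}^k)=(\mathfrak{c}^{k-1}\cdot a)\ot_A\mathfrak{c}^k$. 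Moves (M1) and (M2) come from the diagonal $A^e$-module structure on the tensor power via $\D^e$: since $\D^e(a,a')=(1,a')\ot_A(a,1)$ iterates to $(1,a')\ot_A(1,1)\ot_A\cdots\ot_A(a,1)$, the source $s(a')=(a',1)$ acts only on the last leg and the target $t(a)=(1,a)$ only on the first, and under the coring action $\mathfrak{c}\lt s(a')=\mathfrak{c}\cdot a'$, $\mathfrak{c}\lt t(a)=a\cdot\mathfrak{c}$; combined with the $A^e$-balancing $(\omega\lt b)\ot_{A^e}m=\omega\ot_{A^e}(b\rt m)$ and $s(a')\rt a=a'a$, $t(a)\rt m=ma$, these yield (M1) and (M2).

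First I would apply (M1) in reverse to the left-hand side: writing $a'a=a'\cdot a$ with $a$ in the $M$-slot, (M1) turns $\ot_{A^e}a'a$ into right multiplication of the last leg $(1\ot c^n)$ by $a'$, and by the right $A$-action on the Galois coring $(a\ot c)\cdot a'=aa'\ps{0}\ot c\, a'\ps{1}$ this leg becomes $a'\ps{0}\ot c^n a'\ps{1}$, so the left-hand side equals
\[
(1\ot c^0)\ot_A\cdots\ot_A(1\ot c^{n-1})\ot_A(a'\ps{0}\ot c^n a'\ps{1})\ot_{A^e}a .
\]
Next I would run a downward induction moving the left factor $a'\ps{0}$ leftwards. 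At stage $k$ the element has the form
\[
(1\ot c^0)\ot_A\cdots\ot_A(a'\ps{0}\ot c^k a'\ps{1})\ot_A(1\ot c^{k+1}a'\ps{2})\ot_A\cdots\ot_A(1\ot c^n a'\ps{n-k+1})\ot_{A^e}a ,
\]
and I would factor $(a'\ps{0}\ot c^k a'\ps{1})=a'\ps{0}\cdot(1\ot c^k a'\ps{1})$, push the left multiplication across $\ot_A$ by (M3), and apply the right $A$-action $(1\ot c^{k-1})\cdot a'\ps{0}=(a'\ps{0})\ps{0}\ot c^{k-1}(a'\ps{0})\ps{1}$. Coassociativity then relabels $(a'\ps{0})\ps{0}\ot(a'\ps{0})\ps{1}\ot a'\ps{1}\ot\cdots$ as $a'\ps{0}\ot a'\ps{1}\ot a'\ps{2}\ot\cdots$, shifting every index up by one and reproducing the same form with $k$ replaced by $k-1$.

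Reaching $k=0$ produces $(a'\ps{0}\ot c^0 a'\ps{1})\ot_A(1\ot c^1 a'\ps{2})\ot_A\cdots\ot_A(1\ot c^n a'\ps{n+1})\ot_{A^e}a$, where $a'\ps{0}$ now sits as a left multiplication on the first leg. Finally I would invoke (M2): writing $(a'\ps{0}\ot c^0 a'\ps{1})=a'\ps{0}\cdot(1\ot c^0 a'\ps{1})$ and trading the left multiplication on the first leg for right multiplication in the $M$-slot gives $\ot_{A^e}a a'\ps{0}$, which is exactly the right-hand side. The main obstacle is purely bookkeeping: one must first check that the diagonal $A^e$-action localizes $s$ and $t$ on the extreme legs so that (M1) and (M2) are legitimate, and then keep the Sweedler indices of the iterated $H$-coaction straight through the induction, each use of coassociativity raising all indices by one; no analytic input is needed beyond the (faithful) flatness already available over the base field.
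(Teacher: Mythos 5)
Your argument is correct and is essentially the paper's own proof: the paper likewise trades $a'$ from the coefficient slot into a right multiplication on the last leg via the $A^e$-balancing, slides it leftward across all tensor legs using the right $A$-action $(1\ot c)a'=a'\ps{0}\ot c\,a'\ps{1}$ on the Galois coring together with the $\ot_A$-balancing and coassociativity, and finally trades the resulting left multiplication on the first leg back into the coefficient slot as $aa'\ps{0}$. The only difference is presentational: the paper compresses your explicit downward induction and the verification that $s$ and $t$ act on the extreme legs into a single three-line display.
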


\begin{proof}
The  claim follows at once from the observation
\begin{align*}
& ((1 \ot c^0) \ot_A \cdots \ot_A (1  \ot c^n))\ot_{A^{\rm e}}a'a\\
&= ((1 \ot c^0) \ot_A \cdots \ot_A (1  \ot c^n))a'\ot_{A^{\rm e}}a\\
& =a'\ps{0}((1 \ot c^0a'\ps{1}) \ot_A  (1  \ot c^{1}a'\ps{2})\ot\cdots \ot_A (1  \ot c^n a'\ps{n+1}))\ot_{A^{\rm e}}a\\
& =((1 \ot c^0a'\ps{1}) \ot_A  (1  \ot c^{1}a'\ps{2})\ot\cdots \ot_A (1  \ot c^n a'\ps{n+1}))\ot_{A^{\rm e}}aa'\ps{0}.
\end{align*}
\end{proof}
We are now ready to prove our main technical result.
\begin{proposition}\label{thm-A-C-to-C-H-M}
For any Hopf algebra  $\C{H}$ and a right $\C{H}$-module coalgebra $C$ coaction on a right $\C{H}$-comodule algebra~$A$ the map 
\begin{align}\label{A-C-to-C-H-M}
\begin{split}
&\Phi: {\rm C}^{A^{\rm e}}_{\bullet}(A \ot C, A)\lra {\rm C}^{\mathcal{H}}_{\bullet}(C, M)\\
& \Big((a^0 \ot c^0) \ot_A \ldots \ot_A (a^n  \ot c^n)\Big) \ot_{A^{\rm e}} 1
\mapsto \Big(c^0 a^{1}\ps{1}\ldots a^{n}\ps{1} \ot  c^1 a^{2}\ps{2}\ldots a^{n}\ps{2} \ot\ldots\\
&\ \ \ \ \ \ldots \ot c^{n-2} a^{n-1}\ps{n-1}a^n\ps{n-1}  \ot c^{n-1} a^n\ps{n}\ot c^n\Big) \ot_{\C{H}} [1\ot a^0a^{1}\ps{0}\ldots a^{n}\ps{0}]
\end{split}
\end{align}
is a well defined morphism of cyclic objects. 
\end{proposition}
\begin{proof} 
Let us consider first an auxiliary mapping
\begin{align}\label{Phi-tilde}
\begin{split}
&\widetilde{\Phi}: (A\ot C)^{\ot \bullet+1}\ot A \lra {\rm C}^{\mathcal{H}}_{\bullet}(C, M)\\
& ((a^0 \ot c^0) \odots (a^n  \ot c^n)) \ot a' \mapsto \\
&(c^0 a^{1}\ps{1}\ldots a^{n}\ps{1} \ot  c^1 a^{2}\ps{2}\ldots a^{n}\ps{2} \odots c^{n-1} a^n\ps{n}\ot c^n) \ot_{\mathcal{H}} [1\ot a'a^0a^{1}\ps{0}\ldots a^{n}\ps{0}].
\end{split}
\end{align}
We note that for $1\leq k \leq n$,
\begin{align*}
&\widetilde{\Phi}\Big(((a^0 \ot c^0) \odots (a^{k-1}  \ot c^{k-1}) \ot (a^k  \ot c^k)\odots (a^n  \ot c^n)) \ot a'\Big)   \\
&= \widetilde{\Phi}\Big(((a^0 \ot c^0) \odots (a^{k-1}a^k\ps{0}  \ot c^{k-1} a^k\ps{1}) \ot (1  \ot c^k)\odots (a^n  \ot c^n)) \ot a'\Big),
\end{align*}
what proves that $\widetilde{\Phi}$ factors through the tensor product of copies of $A\ot C$ balanced 
over~$A$, and that by Lemma \ref{A-right-to-left-balanced}
\begin{align*}
& \widetilde{\Phi}\Big(((a'a^0 \ot c^0)\odots (a^n  \ot c^n)) \ot 1\Big)  \\
&=  (c^0 a^{1}\ps{1}\ldots a^{n}\ps{1} \ot  c^1 a^{2}\ps{2}\ldots a^{n}\ps{2} \odots c^{n-1} a^n\ps{n}\ot c^n) \ot_{\mathcal{H}} [1\ot a'a^0a^{1}\ps{0}\ldots a^{n}\ps{0}]  \\
&=  (c^0 a^{1}\ps{1}\ldots a^{n}\ps{1} \ot  c^1 a^{2}\ps{2}\ldots a^{n}\ps{2} \ot \ldots \\
& \hspace{3cm}\ldots \ot c^{n-1} a^n\ps{n}\ot c^n) \ot_{\mathcal{H}} [a'\ps{1}\ot a^0a^{1}\ps{0}\ldots a^{n}\ps{0}a'\ps{0}] \\
&=  (c^0 a^{1}\ps{1}\ldots a^{n}\ps{1}a'\ps{1} \ot  c^1 a^{2}\ps{2}\ldots a^{n}\ps{2}a'\ps{2} \ot \ldots \\
& \hspace{3cm}\ldots \ot c^{n-1} a^n\ps{n}a'\ps{n}\ot c^n a'\ps{n+1}) \ot_{\mathcal{H}} [1\ot a^0a^{1}\ps{0}\ldots a^{n}\ps{0}a'\ps{0}]  \\
&=  \widetilde{\Phi}\Big(((a^0 \ot c^0)\odots (a^na'\ps{0}  \ot c^n a'\ps{1})) \ot 1\Big),
\end{align*}
what proves that  $\eqref{Phi-tilde}$ factors through the tensor product with $A$ balanced over $A^{\rm e}$. 
As a result, \eqref{Phi-tilde} induces a well defined map \eqref{A-C-to-C-H-M}.

Let us now show that the mapping \eqref{A-C-to-C-H-M} commutes with the cyclic structure maps. To this end, we start with the commutation with the faces. Namely, for $0 \leq i \leq n$, we have
\begin{align*}
& \Phi(\d_i((a^0 \ot c^0) \ot_A \ldots \ot_A (a^n \ot c^n) \ot_{A^{\rm e}} \ot 1))  \\
&=  \Phi((a^0 \ot c^0) \ot_A \ldots \ot_A \ve(a^i \ot c^i) \ot_A \ldots \ot_A (a^n \ot c^n) \ot_{A^{\rm e}} \ot 1)  \\
&=  \ve(c^i)\Phi((a^0 \ot c^0) \ot_A \ldots \ot_A (a^{i-1} \ot c^{i-1}) \ot_A (a^ia^{i+1} \ot c^{i+1}) \ot_A (a^n \ot c^n) \ot_{A^{\rm e}} \ot 1)  \\
&=  \ve(c^i) \Big(c^0\cdot a^{1}\ps{1}\ldots a^{n}\ps{1} \ot  c^1\cdot a^{2}\ps{2}\ldots a^{n}\ps{2} \ot \ldots \\
& \ldots \ot c^{i-1}\cdot a^{i}\ps{i}a^{i+1}\ps{i}\ldots  a^{n}\ps{i} \ot c^{i+1}\cdot a^{i+2}\ps{i+1}\ldots  a^{n}\ps{i+1} \ot\ldots \\
& \ldots \ot c^{n-1}\cdot a^n\ps{n-1}\ot c^n\Big) \ot_{\C{H}} [1\ot a^0a^{1}\ps{0}\ldots a^{n}\ps{0}]  \\
&=  \Big(c^0\cdot a^{1}\ps{1}\ldots a^{n}\ps{1} \ot  c^1\cdot a^{2}\ps{2}\ldots a^{n}\ps{2} \ot \ldots \\
&\ldots \ot c^{i-1}\cdot a^{i}\ps{i}a^{i+1}\ps{i}\ldots  a^{n}\ps{i} \ot \ve(c^{i}\cdot a^{i+1}\ps{i+1}\ldots  a^{n}\ps{i+1}) \ot c^{i+1}\cdot a^{i+2}\ps{i+2}\ldots  a^{n}\ps{i+2} \ot\ldots \\
& \ldots \ot c^{n-1}\cdot a^n\ps{n}\ot c^n\Big) \ot_{\C{H}} [1\ot a^0a^{1}\ps{0}\ldots a^{n}\ps{0}]  \\
&=  d_i(\Phi((a^0 \ot c^0) \ot_A \ldots \ot_A (a^n \ot c^n) \ot_{A^{\rm e}} \ot 1)).
\end{align*}
 On the next round, we consider the degeneracy operators. For $0 \leq j \leq n$, we have
\begin{align*}
& \Phi(\s_j((a^0 \ot c^0) \ot_A \ldots \ot_A (a^n \ot c^n) \ot_{A^{\rm e}} \ot 1))  \\
&=  \Phi((a^0 \ot c^0) \ot_A \ldots \ot_A \D(a^j \ot c^j) \ot_A \ldots \ot_A (a^n \ot c^n) \ot_{A^{\rm e}} \ot 1)  \\
&=  \Big(c^0\cdot a^{1}\ps{1}\ldots a^{n}\ps{1} \ot  c^1\cdot a^{2}\ps{2}\ldots a^{n}\ps{2} \ot \ldots \\
&\ldots \ot c^{j-1}\cdot a^{j}\ps{j}a^{j+1}\ps{j}\ldots  a^{n}\ps{j} \ot c^{j}\ps{1}\cdot a^{j+1}\ps{j+1}\ldots  a^{n}\ps{j+1} \ot c^{j}\ps{2}\cdot a^{j+2}\ps{j+2}\ldots  a^{n}\ps{j+2} \ot\ldots \\
& \ldots \ot c^{n-1}\cdot a^n\ps{n+1}\ot c^n\Big) \ot_{\C{H}} [1\ot a^0a^{1}\ps{0}\ldots a^{n}\ps{0}]  \\
&=  \Big(c^0\cdot a^{1}\ps{1}\ldots a^{n}\ps{1} \ot  c^1\cdot a^{2}\ps{2}\ldots a^{n}\ps{2} \ot \ldots \\
&\ldots \ot c^{j-1}\cdot a^{j}\ps{j}a^{j+1}\ps{j}\ldots  a^{n}\ps{j} \ot \D(c^{j}\cdot a^{j+1}\ps{j+1}\ldots  a^{n}\ps{j+1}) \ot c^{j+1}\cdot a^{j+2}\ps{j+2}\ldots  a^{n}\ps{j+2} \ot\ldots \\
& \ldots \ot c^{n-1}\cdot a^n\ps{n}\ot c^n\Big) \ot_{\C{H}} [1\ot a^0a^{1}\ps{0}\ldots a^{n}\ps{0}]  \\
&=  s_j(\Phi((a^0 \ot c^0) \ot_A \ldots \ot_A (a^n \ot c^n) \ot_{A^{\rm e}} \ot 1)).
\end{align*}

As for the cyclic operator, we have
\begin{align*}
& \Phi(\tau_n((a^0 \ot c^0) \ot_A \ldots \ot_A (a^n \ot c^n) \ot_{A^{\rm e}} \ot 1))  \\
&=   \Phi((a^n \ot c^n) \ot_A (a^0 \ot c^0) \ot_A \ldots \ot_A (a^{n-1} \ot c^{n-1}) \ot_{A^{\rm e}} \ot 1)  \\
&=  \Big(c^n\cdot a^{0}\ps{1}\ldots a^{n-1}\ps{1} \ot  c^0\cdot a^{1}\ps{2}\ldots a^{n-1}\ps{2} \ot \ldots \\
&\ldots \ot c^{n-2}\cdot a^{n-1}\ps{n}\ot c^{n-1}\Big) \ot_{\C{H}} [1\ot a^na^{0}\ps{0}\ldots a^{n-1}\ps{0}]  \\
&=  \Big(c^n\cdot a^{0}\ps{1}\ldots a^{n-1}\ps{1} \ot  c^0\cdot a^{1}\ps{2}\ldots a^{n-1}\ps{2} \ot \ldots \\
&\ldots \ot c^{n-2}\cdot a^{n-1}\ps{n}\ot c^{n-1}\Big) \ot_{\C{H}} [a^n\ps{1}\ot a^{0}\ps{0}\ldots a^{n-1}\ps{0}a^n\ps{0}]  \\
&=  \Big(c^n\cdot a^{0}\ps{1}\ldots a^{n-1}\ps{1}a^n\ps{1} \ot  c^0\cdot a^{1}\ps{2}\ldots a^{n}\ps{2} \ot \ldots \\
& \ldots \ot c^{n-2}\cdot a^{n-1}\ps{n}a^n\ps{n-1}\ot c^{n-1}\cdot a^n\ps{n}\Big) \ot_{\C{H}} [1\ot a^{0}\ps{0}\ldots a^{n-1}\ps{0}a^n\ps{0}]  \\
&=  t_n(\Phi((a^0 \ot c^0) \ot_A \ldots \ot_A (a^n \ot c^n) \ot_{A^{\rm e}} \ot 1)),
\end{align*}
using Lemma \ref{A-right-to-left-balanced} one last time at the third equality.
\end{proof}

\begin{proposition}\label{thm-A-C-to-C-H-M}
For any Hopf algebra $\C{H}$ with invertible antipode and a right $\C{H}$-module coalgebra $C$ coaction on a right $\C{H}$-comodule algebra~$A$ the map (\ref{A-C-to-C-H-M}) is
an isomorphism of cyclic objects.
\end{proposition}
\begin{proof}
For constructing the inverse, let us define an auxiliary mapping
\begin{align}\label{map-psi}
\widetilde{\Psi}: C^{\ot \bullet+1}\ot \C{H}\ot A &\lra {\rm C}^{A^{\rm e}}_{\bullet}(A \ot C, A) \nonumber\\
 \left(c^0 \odots c^n\right) \ot \left(\mb{h} \ot a\right) &\mapsto \\
 \Big(\hspace{-0.3em}\left(1 \ot c^0 \mb{h}\ps{1}S^{-1}(a\ps{n+1})\right)\hspace{-0.1em} \ot_A \hspace{-0.15em}\left(1 \ot c^1 h\ps{2}S^{-1}(a\ps{n})\right)\hspace{-0.1em} &\ot_A \cdots \ot_A \hspace{-0.15em}\left(1 \ot c^n \mb{h}\ps{n+1}S^{-1}(a\ps{1})\right)\hspace{-0.3em}\Big)\hspace{-0.1em} \ot_{A^{\rm e}}\hspace{-0.1em} a\ps{0}.\nonumber
\end{align}
We first note that for all $a,a' \in A$ and $\mb{h}\in \C{H}$
\begin{align*}
& \widetilde{\Psi}\big(\left(c^0 \odots c^n\right) \ot \left(\mb{h}a'\ps{1}\ot aa'\ps{0}\right)\big)  \\
&=  \Big((1 \ot c^0 \mb{h}\ps{1}a'\ps{1} S^{-1}(a\ps{n+1}a'\ps{0}\ps{n+1})) \ot_A (1 \ot c^1 \mb{h}\ps{2}a'\ps{2} S^{-1}(a\ps{n}a'\ps{0}\ps{n})) \ot_A \ldots \\
& \ldots \ot_A  (1 \ot c^n \mb{h}\ps{n+1}a'\ps{n+1}S^{-1}(a\ps{1}a'\ps{0}\ps{1}))\Big) \ot_{A^{\rm e}} a\ps{0}a'\ps{0}\ps{0}  \\
&= \Big((1 \ot c^0 \mb{h}\ps{1}S^{-1}(a\ps{n+1})) \ot_A (1 \ot c^1 \mb{h}\ps{2}S^{-1}(a\ps{n})) \ot_A \ldots \\
& \ldots \ot_A  (1 \ot c^n \mb{h}\ps{n+1}S^{-1}(a\ps{1}))\Big) \ot_{A^{\rm e}} a\ps{0}a'  \\
&= \Big((1 \ot c^0 \mb{h}\ps{1}S^{-1}(a\ps{n+1})) \ot_A (1 \ot c^1 \mb{h}\ps{2}S^{-1}(a\ps{n})) \ot_A \ldots \\
& \ldots \ot_A  (1 \ot c^n \mb{h}\ps{n+1}S^{-1}(a\ps{1}))\Big) \ot_{A^{\rm e}} (a\ps{0},1)  a'  \\
&= \Big((1 \ot c^0 \mb{h}\ps{1}S^{-1}(a\ps{n+1})) \ot_A (1 \ot c^1 \mb{h}\ps{2}S^{-1}(a\ps{n})) \ot_A \ldots \\
& \ldots \ot_A  (1 \ot c^n \mb{h}\ps{n+1}S^{-1}(a\ps{1}))\Big)  a\ps{0} \ot_{A^{\rm e}} a'  \\
&= \Big((a \ot c^0 \mb{h}\ps{1}) \ot_A (1 \ot c^1 \mb{h}\ps{2}) \ot_A \ldots \ot_A  (1 \ot c^n h\ps{n+1})\Big) \ot_{A^{\rm e}} a'  \\
&= \Big((a \ot c^0 \mb{h}\ps{1}) \ot_A (1 \ot c^1 \mb{h}\ps{2}) \ot_A \ldots \ot_A  (1 \ot c^n \mb{h}\ps{n+1})\Big) \ot_{A^{\rm e}} (a',1) 1  \\
&= (a',1) \Big((a \ot c^0 \mb{h}\ps{1}) \ot_A (1 \ot c^1 \mb{h}\ps{2}) \ot_A \ldots \ot_A  (1 \ot c^n \mb{h}\ps{n+1})\Big) \ot_{A^{\rm e}} a'  \\
&= \Big((a'a \ot c^0 \mb{h}\ps{1}) \ot_A (1 \ot c^1 \mb{h}\ps{2}) \ot_A \ldots \ot_A  (1 \ot c^n \mb{h}\ps{n+1})\Big) \ot_{A^{\rm e}} 1  \\
&= (a'a,1) \Big((1 \ot c^0 \mb{h}\ps{1}) \ot_A (1 \ot c^1 \mb{h}\ps{2}) \ot_A \ldots \ot_A  (1 \ot c^n \mb{h}\ps{n+1})\Big) \ot_{A^{\rm e}} 1  \\
&= \Big((1 \ot c^0 \mb{h}\ps{1}) \ot_A (1 \ot c^1 \mb{h}\ps{2}) \ot_A \ldots \ot_A  (1 \ot c^n\cdot \mb{h}\ps{n+1})\Big) \ot_{A^{\rm e}} a'a  \\
&=  \Big((1 \ot c^0 \mb{h}\ps{1}S^{-1}(a'\ps{n+1}a\ps{n+1})) \ot_A (1 \ot c^1 \mb{h}\ps{2}S^{-1}(a'\ps{n}a\ps{n})) \ot_A \ldots \\
& \ldots \ot_A  (1 \ot c^n \mb{h}\ps{n+1}S^{-1}(a'\ps{1}a\ps{1}))\Big) \ot_{A^{\rm e}} a'\ps{0}a\ps{0}  \\
&= \widetilde{\Psi}\big(\left(c^0 \odots c^n\right) \ot \left(\mb{h}\ot a'a\right)\big),
\end{align*}
where we used Lemma \ref{A-right-to-left-balanced} once more at the eleventh equality, what proves that \eqref{map-psi} factors through the tensor product with the quotient $M$ of $\C{H}\ot A$. Moreover, it is evident that
\[
\widetilde{\Psi}\left((c^0 \odots c^n) \ot (\mb{h}\ot a)\right) = \widetilde{\Psi}\left((c^0 \mb{h}\ps{1} \odots c^n \mb{h}\ps{n+1}) \ot (1\ot a)\right),
\]
what proves that \eqref{map-psi} factors through the tensor product with $M$ balanced over $\C{H}$.
As a result, \eqref{map-psi} induces a well defined map
\begin{align}\label{map-psi-II}
\begin{split}
&\Psi:{\rm C}^{\mathcal{H}}_{\bullet}(C, M) \lra {\rm C}^{A^{\rm e}}_{\bullet}(A \ot C, A)\\
& (c^0 \odots c^n) \ot_{\C{H}} [1 \ot a] \mapsto \\
& \Big((1 \ot c^0 S^{-1}(a\ps{n+1})) \ot_A (1 \ot c^1 S^{-1}(a\ps{n})) \ot_A \ldots \ot_A (1 \ot c^n S^{-1}(a\ps{1}))\Big) \ot_{A^{\rm e}} a\ps{0}.
\end{split}
\end{align}
Finally we see that
\begin{align*}
& \Phi\circ \Psi\Big(\hspace{-0.27em}\left(c^0 \odots c^n\right) \ot_\C{H} [1\ot a]\Big)  \\
&=  \Phi \Big(\hspace{-0.27em}\left(1 \ot c^0 S^{-1}(a\ps{n+1})\right) \ot_A \left(1 \ot c^1 S^{-1}(a\ps{n})\right) \ot_A \ldots \ot_A \left(1 \ot c^n S^{-1}(a\ps{1})\right) \ot_{A^{\rm e}} a\ps{0}\Big) \\
&=  \Phi \Big(\hspace{-0.27em}\left((a \ot c^0) \ot_A (1 \ot c^1) \ot_A \ldots \ot_A (1 \ot c^n)\right) \ot_{A^{\rm e}} 1\Big)  \\
&=  \left(c^0 \odots c^n\right) \ot_\C{H} [1\ot a],
\end{align*}
and that
\begin{align*}
& \Psi\circ \Phi\Big(\hspace{-0.27em}\left((1 \ot c^0) \ot_A (1 \ot c^1) \ot_A \cdots \ot_A (1  \ot c^n)\right) \ot_{A^{\rm e}} a\Big)  \\
&=  \Psi \Big(\hspace{-0.27em}\left(c^0 \odots  c^n\right) \ot_\C{H} [1\ot a]\Big)  \\
&=  \left((a \ot c^0) \ot_A (1 \ot c^1) \ot_A \cdots \ot_A (1  \ot c^n)\right) \ot_{A^{\rm e}} 1  \\
&=  a \left((1 \ot c^0) \ot_A (1 \ot c^1) \ot_A \cdots \ot_A (1  \ot c^n)\right) \ot_{A^{\rm e}} 1  \\
&= \left((1 \ot c^0) \ot_A (1 \ot c^1) \ot_A \cdots \ot_A (1  \ot c^n)\right) \ot_{A^{\rm e}} a
\end{align*}
which proves that \eqref{A-C-to-C-H-M} and \eqref{map-psi-II} are inverse to each other.
\end{proof}

\begin{corollary}
Composing \eqref{A-C-to-C-H-M} with 
\eqref{cyclic-mod-A-tensor-C} and \eqref{alg-ext-Z-to-aux-iso-coring-FY}, we obtain a \emph{characteristic map}
\begin{align}\label{char}
\begin{gathered}
{\rm C}_{\bullet}(A\hspace{0.2em}|\hspace{0.1em}B)\rightarrow  {\rm C}^{\mathcal{H}}_{\bullet}(C, M),\\
a^0 \widehat{\ot}_B a^1 \widehat{\ot}_B \cdots \widehat{\ot}_B  a^n \mapsto \Big(e a^{1}\ps{1}\ldots a^{n}\ps{1} \ot  e a^{2}\ps{2}\ldots a^{n}\ps{2} \ot\ldots\\
\ \ \ \ \ \ldots \ot e a^{n-1}\ps{n-1}a^n\ps{n-1}  \ot e a^n\ps{n}\ot e\Big) \ot_{\C{H}} [1\ot a^0a^{1}\ps{0}\ldots a^{n}\ps{0}]
\end{gathered}
\end{align}
which is a map of cyclic objects. If a Hopf algebra 
$\mathcal{H}$ has invertible antipode,  a right 
$\mathcal{H}$-module coalgebra $C$ and a right 
$\mathcal{H}$-comodule algebra $A$ are 
$\mathcal{H}$-entwined and satisfy the Galois condition, the map (\ref{char}) of cyclic objects is an isomorphism.
\end{corollary}
The next proposition rewrites the above definition of the characteristic map as commutativity of the following square.
\begin{proposition}
The  diagram of  
cyclic vector spaces 
\begin{align}\label{q-iso-cyc-fun}
\begin{array}{ccc}
 {\rm C}^{\mathcal{H}}_{\bullet}(C, M) &\longleftarrow &{\rm C}^{A^{\rm e}}_{\bullet}(A \ot C, A)\\
\uparrow&   &\uparrow\\
{\rm C}_{\bullet}(A\hspace{0.2em}|\hspace{0.1em}B) &\longleftarrow & {\rm C}^{A^{\rm e}}_{\bullet}(A \ot_B\hspace{-0.175em} A, A)
\end{array}
\end{align}
with the left vertical arrow being the characteristic map \eqref{char}, the right vertical arrow being induced by the canonical map \eqref{entw-can} of corings, 
and  
the top and bottom arrows are \eqref{A-C-to-C-H-M}
and \eqref{aux-iso-coring-FY-to-alg-ext-Z}, respectively, commutes.
\end{proposition}
\begin{proof}
Since the map \eqref{alg-ext-Z-to-aux-iso-coring-FY} is inverse to \eqref{aux-iso-coring-FY-to-alg-ext-Z}, the definition of the characteristic map as the composition of\eqref{A-C-to-C-H-M} with 
\eqref{cyclic-mod-A-tensor-C} and \eqref{alg-ext-Z-to-aux-iso-coring-FY} reads as commutativity of \eqref{q-iso-cyc-fun}.
\end{proof}
\begin{proposition}
If the antipode of the Hopf algebra $\mathcal{H}$ is bijective, the commutative square \eqref{q-iso-cyc-fun} is an isomorphism between the map \eqref{cyclic-mod-A-tensor-C}  induced by the canonical map \eqref{entw-can} of corings, and the characteristic map \eqref{char}.
\end{proposition}
\begin{proof}
The botom arrow \eqref{alg-ext-Z-to-aux-iso-coring-FY} in the commutative square \eqref{q-iso-cyc-fun} is always an isomorphism. If the antipode of the Hopf algebra $\mathcal{H}$ is bijective, the top arrow is an isomorphism as well. Since the horizontal arrows are isomorphisms, the square is an isomorphism between the vertical arrows.
\end{proof}
\subsection{The Galois case} The Hopf-Galois case is very special for two different reasons. First of all, the inertia is trivial, second, the coalgebra-encoded Galois symmetry coincides with the Hopf-algebra-encoded entwining symmetry. Classically speaking, it restricts the situation to the context of a free group action. However, showing that our characteristic map in this apparently trivial case reduces to the well known  
Jara-\c{S}tefan isomorphism of cyclic objects, requires some effort. Note the moment in the proof when we have to assume that the antipode of the Hopf algebra is invertible.

We start this subsection by proving  that the SAYD module of Theorem \ref{thm-SAYD} generalizes the one introduced by Jara and \c{S}tefan in \cite{JaraStef06} for Hopf-Galois extensions. Then in our isomorphism of cyclic objects we can restrict to the case of the right $\C{H}$-module coalgebra $C$ equal to the Galois Hopf algebra being $\C{H}$ itself.

For that purpose we need the following simple lemmas, playing also a role in defining the Miyashita-Ulbrich action. For the sake of  fixing the conventions, and avoiding possible confusion of left and right, especially after applying the flip of tensorands, we shall prove the lemmas in detail.

\begin{lemma}
For any right Hopf-Galois extension $A$ of an algebra $B$, with the Galois Hopf algebra $\C{H}$ and the translation map
\[
\tau: \C{H}\to A\ot_{B}A,\qquad  \mb{h}\mapsto \mb{h}^{[1]}\ot_{B} \mb{h}^{[2]}
\]
one has
\begin{align}\label{tau hh'}
(\mb{h}\mb{h}')^{[1]}\ot_{B} (\mb{h}\mb{h}')^{[2]}=\mb{h}'^{[1]}\mb{h}^{[1]}\ot_{B} \mb{h}^{[2]}\mb{h}'^{[2]}.
\end{align}
\end{lemma}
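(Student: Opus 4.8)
The plan is to play the two sides off against the canonical map $can\colon A\ot_B A\to A\ot H$, $a\ot_B a'\mapsto aa'\ns{0}\ot a'\ns{1}$, which is bijective by the Galois hypothesis. By the very definition of the translation map one has $\tau(h)=can^{-1}(1\ot h)$, so applying $can$ to $\tau(h)=h^{[1]}\ot_B h^{[2]}$ yields the fundamental identity ${h^{[1]}}{h^{[2]}}\ns{0}\ot {h^{[2]}}\ns{1}=1\ot h$ in $A\ot H$. Since $\tau(hh')=can^{-1}(1\ot hh')$ and $can$ is injective, it is enough to verify that
\[
can\bigl(h'^{[1]}h^{[1]}\ot_B h^{[2]}h'^{[2]}\bigr)=1\ot hh'.
\]
First I would expand the left-hand side: by the definition of $can$, together with the fact that the coaction is an algebra map (as $A$ is an $H$-comodule algebra), it equals ${h'^{[1]}}{h^{[1]}}{h^{[2]}}\ns{0}{h'^{[2]}}\ns{0}\ot {h^{[2]}}\ns{1}{h'^{[2]}}\ns{1}$.

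The step I expect to be the main obstacle is that the two copies of $\tau$ are now entangled inside a single sum, so the fundamental identity cannot be substituted term by term. The way around it is to fix a representative $\sum_j u_j\ot v_j\in A\ot A$ of $\tau(h')$ and to regard the expanded expression as the value at $can(\tau(h))$ of the linear endomorphism $\Psi$ of $A\ot H$ defined by $p\ot q\mapsto \sum_j u_j\,p\,{v_j}\ns{0}\ot q\,{v_j}\ns{1}$. This $\Psi$ is a perfectly well-defined linear map on the honest tensor product $A\ot H$ (it need not descend to $A\ot_B A$, and its possible dependence on the chosen representative is harmless), and a direct check shows that the expanded expression is exactly $\Psi\bigl({h^{[1]}}{h^{[2]}}\ns{0}\ot {h^{[2]}}\ns{1}\bigr)$. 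Because ${h^{[1]}}{h^{[2]}}\ns{0}\ot {h^{[2]}}\ns{1}=1\ot h$ is a genuine equality of elements of $A\ot H$, linearity of $\Psi$ legitimises the substitution, giving $\Psi(1\ot h)=\sum_j u_j{v_j}\ns{0}\ot h\,{v_j}\ns{1}$.

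To finish I would read $\sum_j u_j{v_j}\ns{0}\ot {v_j}\ns{1}$ as $can(\tau(h'))=1\ot h'$, again a genuine identity in $A\ot H$, and apply to it the evidently well-defined linear map $p\ot q\mapsto p\ot hq$; this sends $1\ot h'$ to $1\ot hh'$ and the left-hand side to $\sum_j u_j{v_j}\ns{0}\ot h\,{v_j}\ns{1}=\Psi(1\ot h)$. Hence $can(h'^{[1]}h^{[1]}\ot_B h^{[2]}h'^{[2]})=1\ot hh'=can(\tau(hh'))$, and injectivity of $can$ delivers the claimed identity $\tau(hh')=h'^{[1]}h^{[1]}\ot_B h^{[2]}h'^{[2]}$. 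It is worth stressing that the antipode plays no role here: the only ingredients are the bijectivity of $can$, the identity $can\circ\tau=1\ot(-)$, the multiplicativity of the coaction, and two substitutions whose legitimacy rests on the fact that in each case the quantity being replaced occurs as a single honest element of $A\ot H$ rather than termwise inside the sum.
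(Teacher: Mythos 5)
Your proof is correct and follows essentially the same route as the paper's: both apply $can$ to reduce the claim to the identity $can\circ\tau=1\ot(-)$ used twice together with the multiplicativity of the coaction, and conclude by injectivity of $can$. Your auxiliary map $\Psi$ is simply a careful formalization of the substitution step that the paper performs by ``inscribing $1$ between the two left-most tensorands'' and replacing it via the fundamental identity.
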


\begin{proof}
By the definition of the translation map as the restriction of the inverse $can^{-1}$ to the canonical map $can$ to $1\otimes \C{H}\subseteq A\ot \C{H}$  
one has
\begin{align}\label{trans-can}
1\ot \mb{h}=can(can^{-1}(1\ot \mb{h}))=can(\mb{h}^{[1]}\ot_{B}\mb{h}^{[2]})=\mb{h}^{[1]}\mb{h}^{[2]}\ps{0}\ot \mb{h}^{[2]}\ps{1}.
\end{align}
Applying \eqref{trans-can}  to $1\ot \mb{h}\mb{h}'$ one gets
\begin{align}\label{LHS hh'}
1\ot \mb{h}\mb{h}'=(\mb{h}\mb{h}')^{[1]}(\mb{h}\mb{h}')^{[2]}\ps{0}\ot (\mb{h}\mb{h}')^{[2]}\ps{1}.
\end{align}
Applying \eqref{trans-can} to $1\ot \mb{h}'$ in  $1\ot \mb{h}\mb{h}'=(1\ot \mb{h})(1\ot \mb{h}')$, inscribing $1$ between the two left-most tensorands,  applying \eqref{trans-can} to $1\ot \mb{h}$ and finally using the definition of a comodule algebra one gets
\begin{align}\label{RHS hh'}
\begin{split}
&1\ot \mb{h}\mb{h}'=\mb{h}'^{[1]}\mb{h}'^{[2]}\ps{0}\ot \mb{h}\mb{h}'^{[2]}\ps{1}=\mb{h}'^{[1]}1\mb{h}'^{[2]}\ps{0}\ot \mb{h}\mb{h}'^{[2]}\ps{1}\\
&=\mb{h}'^{[1]}\mb{h}^{[1]}\mb{h}^{[2]}\ps{0}\mb{h}'^{[2]}\ps{0}\ot \mb{h}^{[2]}\ps{1}\mb{h}'^{[2]}\ps{1}=\mb{h}'^{[1]}\mb{h}^{[1]}(\mb{h}^{[2]}\mb{h}'^{[2]})\ps{0}\ot (h^{[2]}\mb{h}'^{[2]})\ps{1}.
\end{split}
\end{align}
Now, applying  $can^{-1}$ to the equality
\[
(\mb{h}\mb{h}')^{[1]}(\mb{h}\mb{h}')^{[2]}\ps{0}\ot (\mb{h}\mb{h}')^{[2]}\ps{1}=\mb{h}'^{[1]}\mb{h}^{[1]}(\mb{h}^{[2]}\mb{h}'^{[2]})\ps{0}\ot (\mb{h}^{[2]}\mb{h}'^{[2]})\ps{1},
\]
resulting from comparing the right hand sides of \eqref{LHS hh'} and  \eqref{RHS hh'}, one obtains the desired identity.  
\end{proof}

\begin{lemma} 
The image of the translation map belongs to the $B$-centralizer $(A\otimes_{B}A)^{B}$ of the $A$-bimodule $A\otimes_{B}A$, namely,
\begin{align}\label{B centr A ot A}
\tau (\C{H})\subseteq (A\otimes_{B}A)^{B}.
\end{align}
\end{lemma}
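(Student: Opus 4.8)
The plan is to unwind the definition of the $B$-centralizer and reduce the claim to an equality in $A\ot H$, where the injectivity of the canonical map becomes available. Concretely, $\tau(h)\in (A\ot_{B}A)^{B}$ means $b\,\tau(h)=\tau(h)\,b$ for every $b\in B$ and every $h\in H$, where the two $A$-bimodule actions are $b\,(a\ot_{B}a')=ba\ot_{B}a'$ and $(a\ot_{B}a')\,b=a\ot_{B}a'b$. Since $can$ is bijective, hence injective, it suffices to check that $can$ sends $b\,\tau(h)$ and $\tau(h)\,b$ to one and the same element of $A\ot H$, and I expect both to equal $b\ot h$.

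First I would evaluate $can$ on the left action. By the left $A$-linearity of $can$ together with the defining identity \eqref{trans-can}, which reads $can(\tau(h))=1\ot h$, one gets $can(b\,\tau(h))=can(bh^{[1]}\ot_{B}h^{[2]})=b\cdot(1\ot h)=b\ot h$. Next I would evaluate $can$ on the right action: $can(\tau(h)\,b)=can(h^{[1]}\ot_{B}h^{[2]}b)=h^{[1]}(h^{[2]}b)\ps{0}\ot(h^{[2]}b)\ps{1}$. The only place where the hypothesis $b\in B=A^{{\rm co}\,H}$ enters is here: since the coaction is an algebra map and $b$ is coinvariant, so that its coaction is $b\ps{0}\ot b\ps{1}=b\ot 1$, the comodule-algebra property gives $(h^{[2]}b)\ps{0}\ot(h^{[2]}b)\ps{1}=h^{[2]}\ps{0}b\ot h^{[2]}\ps{1}$, and therefore $can(\tau(h)\,b)=h^{[1]}h^{[2]}\ps{0}\,b\ot h^{[2]}\ps{1}$.

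It then remains to recognise the right-hand side. Right-multiplying the first tensorand of the identity \eqref{trans-can} by $b$ produces $h^{[1]}h^{[2]}\ps{0}\,b\ot h^{[2]}\ps{1}=b\ot h$, so $can(\tau(h)\,b)=b\ot h=can(b\,\tau(h))$, and the injectivity of $can$ yields $\tau(h)\,b=b\,\tau(h)$, which is exactly \eqref{B centr A ot A}. I do not anticipate a genuine obstacle; the single point demanding care is the bookkeeping in the computation of $can(\tau(h)\,b)$, namely that the right $B$-action lands on the comodule leg $h^{[2]}$ and that coinvariance collapses its coaction to $b\ot 1$, leaving the $H$-component untouched.
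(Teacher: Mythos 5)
Your proof is correct and is essentially the paper's own argument viewed from the other side of the bijection: the paper computes $b\,\tau(h)$ and $\tau(h)\,b$ as $can^{-1}(b\ot h)$ using the $A$-bilinearity of $can^{-1}$ and the coinvariance $b\ps{0}\ot b\ps{1}=b\ot 1$, while you apply $can$ to both elements and invoke injectivity. The ingredients (identity \eqref{trans-can}, left $A$-linearity, and the comodule-algebra property combined with coinvariance of $b$) are the same, so this is the same proof in mirror image.
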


\begin{proof}
Let us multiply the element $\tau(\mb{h})=\mb{h}^{[1]}\ot \mb{h}^{[2]}$ from both sides by $b\in B=A^{co\ H}\subseteq A$. 
\begin{align*}
b\tau(\mb{h})&=b\mb{h}^{[1]}\ot \mb{h}^{[2]}=bcan^{-1}(1\ot \mb{h})=can^{-1}(b\ot \mb{h}),\\
\tau(\mb{h})b&=\mb{h}^{[1]}\ot \mb{h}^{[2]}b=can^{-1}(1\ot \mb{h})b=can^{-1}((1\ot \mb{h})b)\\
&=can^{-1}(b\ps{0}\ot \mb{h}b\ps{1})=can^{-1}(b\ot \mb{h}),
\end{align*}
hence $b\tau(\mb{h})=\tau(\mb{h})b$.
\end{proof}

We shall also need the following lemma.

\begin{lemma}
Given a Hopf-Galois extension $B \subseteq A$, and $a\in A$, the identity
\begin{align}\label{a ot 1}
a\ot 1=a\ps{1}^{\hspace{0.5em}[2]}\ot a\ps{0}a\ps{1}^{\hspace{0.5em}[1]}
\end{align}
holds in $(A\ot A)_{B}$.
\end{lemma}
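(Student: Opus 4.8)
The plan is to reduce the asserted identity, which lives in $(A\ot A)_{B}$, to the standard ``colinearity'' identity for the translation map, which lives in $A\ot_{B}A$, and then to transport it across the flip of tensorands. The point is that the two quotients $A\ot_{B}A$ and $(A\ot A)_{B}$ of $A\ot A$ are exchanged by the flip, so an identity in one can be read off from an identity in the other.

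First I would record the auxiliary identity
\[
a\ps{0}\,a\ps{1}^{\hspace{0.5em}[1]}\ot_{B}a\ps{1}^{\hspace{0.5em}[2]}=1\ot_{B}a\qquad\text{in }A\ot_{B}A .
\]
It follows from the left $A$-linearity of $can$ together with its bijectivity: by the defining property of $\tau$ one has $can(a\ps{1}^{\hspace{0.5em}[1]}\ot_{B}a\ps{1}^{\hspace{0.5em}[2]})=1\ot a\ps{1}$, so left multiplication by $a\ps{0}$ and left $A$-linearity give $can(a\ps{0}a\ps{1}^{\hspace{0.5em}[1]}\ot_{B}a\ps{1}^{\hspace{0.5em}[2]})=a\ps{0}\ot a\ps{1}=can(1\ot_{B}a)$, whence injectivity of $can$ yields the displayed equality. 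This is nothing more than a repackaging of \eqref{trans-can}.

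The heart of the argument is the flip map
\[
\phi:A\ot_{B}A\lra (A\ot A)_{B},\qquad x\ot_{B}y\mapsto [\,y\ot x\,],
\]
and I expect its well-definedness to be the one genuinely non-formal step, since the $B$-balancing relation defining $A\ot_{B}A$ and the commutator relation defining $(A\ot A)_{B}$ are interchanged by the flip and must be matched carefully. Using the $B$-bimodule structure $b\cdot(u\ot v)\cdot b'=bu\ot vb'$, the image of a balancing element $xb\ot_{B}y-x\ot_{B}by$ is $[\,y\ot xb\,]-[\,by\ot x\,]$, which vanishes in $(A\ot A)_{B}$ because $by\ot x-y\ot xb=b\cdot(y\ot x)-(y\ot x)\cdot b$ is a commutator. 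Hence $\phi$ descends to a well-defined linear map.

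Finally I would apply the linear map $\phi$ to both sides of the auxiliary identity: its left-hand side $a\ps{0}a\ps{1}^{\hspace{0.5em}[1]}\ot_{B}a\ps{1}^{\hspace{0.5em}[2]}$ is sent to $[\,a\ps{1}^{\hspace{0.5em}[2]}\ot a\ps{0}a\ps{1}^{\hspace{0.5em}[1]}\,]$ and its right-hand side $1\ot_{B}a$ to $[\,a\ot 1\,]$, and the equality of these images is exactly the assertion of the lemma. No further computation is needed, so the only inputs are the auxiliary identity (equivalently, \eqref{trans-can}) and the well-definedness of $\phi$.
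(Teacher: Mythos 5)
Your proof is correct and follows essentially the same route as the paper's: the paper likewise reduces to the identity $1\ot_{B}a=a\ps{0}a\ps{1}^{\hspace{0.5em}[1]}\ot_{B}a\ps{1}^{\hspace{0.5em}[2]}$ in $A\ot_{B}A$ (obtained by applying $can^{-1}$ to $can(1\ot_{B}a)=a\ps{0}\ot a\ps{1}$, using left $A$-linearity) and then passes to the flipped version in $(A\ot A)_{B}$. The only difference is that you make the flip map explicit and verify its well-definedness, a step the paper leaves implicit in the phrase ``it suffices to prove the flipped version.''
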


\begin{proof} 
It suffices to prove the flipped version in $A\ot_{B}A$, that is,
\[
1\ot_{B}a=can^{-1}(can(1\ot_{B}a))=can^{-1}(a\ps{0}\ot a\ps{1})= a\ps{0}a\ps{1}^{\hspace{0.5em}[1]}\ot_{B}a\ps{1}^{\hspace{0.5em}[2]}.
\]
\end{proof}
In the following proposition, for any $B$-bimodule $N$ we use the symbol $N_{B}$ to denote the quotient space $N/[B,N]$.  
\begin{proposition}
For any right Hopf-Galois extension $A$ of an algebra $B$, with the Galois Hopf algebra $\C{H}$ with invertible antipode, the SAYD module $M$ is isomorphic to the quotient space $A_{B}$ with its canonical structure of an SAYD module.
\end{proposition}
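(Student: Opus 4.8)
The plan is to exhibit the isomorphism explicitly by the map
\[
\psi: A_B \lra M, \qquad \wbar{a} \mapsto [1\ot a],
\]
together with its inverse
\[
\phi: M \lra A_B, \qquad [h\ot a]\mapsto \wbar{h^{[2]}\,a\,h^{[1]}},
\]
where $h^{[1]}\ot_B h^{[2]}=\tau(h)$ is the translation element. First I would record the canonical SAYD structure on $A_B$: the left $H$-action is the (left) Miyashita--Ulbrich action $h\rt \wbar{a}:=\wbar{h^{[2]}ah^{[1]}}$, and the right $H$-coaction $\wbar{a}\mapsto \wbar{a\ps{0}}\ot a\ps{1}$ is induced by the comodule structure of $A$. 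The coaction descends to the quotient because $\rho([b,a])=[b,a\ps{0}]\ot a\ps{1}\in [B,A]\ot H$ for $b\in B$ (using $\rho(b)=b\ot 1$), and $h\rt(-)$ is a genuine left action by \eqref{tau hh'}. The well-definedness of $\psi$ is the first small point: taking $a'=b\in B$ in the defining relation of $M$ (so $b\ps{0}=b$, $b\ps{1}=1$) gives $[1\ot ab]=[1\ot ba]$, so $\psi$ kills $[B,A]$; its colinearity is then a one-line check from \eqref{comod}, namely $\rho_M([1\ot a])=[1\ot a\ps{0}]\ot a\ps{1}=\psi(\wbar{a\ps{0}})\ot a\ps{1}$.

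The heart of the argument, and the main obstacle, is the well-definedness of $\phi$. For each fixed $c\in A$ the map $A\ot_B A\to A_B$, $u\ot_B v\mapsto \wbar{v c u}$ is well-defined (the only move required is $\wbar{vcub}=\wbar{bvcu}$ for $b\in B$, valid because $b$ then sits at a cyclic boundary of $A_B$), so $h\ot c\mapsto \wbar{h^{[2]}ch^{[1]}}$ is well-defined on $H\ot A$ by precomposing with $\tau$. It remains to kill the relations of $M$, i.e.\ to prove, after using \eqref{tau hh'} to split $\tau(ha'\ps{1})$,
\[
\wbar{h^{[2]}(a'\ps{1})^{[2]}\,c\,a'\ps{0}(a'\ps{1})^{[1]}\,h^{[1]}}=\wbar{h^{[2]}\,a'c\,h^{[1]}}.
\]

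I would split this into two independent steps, each consuming one of the preparatory lemmas. Step one: the map $f:A\to A_B$, $y\mapsto \wbar{h^{[2]}yh^{[1]}}$ factors through $A_B$. Applying the well-defined map $u\ot_B v\mapsto\wbar{vyu}$ to the $B$-centrality identity $b\,h^{[1]}\ot_B h^{[2]}=h^{[1]}\ot_B h^{[2]}b$ of \eqref{B centr A ot A} yields $f(by)=f(yb)$, so $f$ descends to a map $A_B\to A_B$ which is exactly $h\rt(-)$. Step two: applying the well-defined map $u\ot_B v\mapsto\wbar{vcu}$ to the identity $a'\ps{0}(a'\ps{1})^{[1]}\ot_B (a'\ps{1})^{[2]}=1\ot_B a'$ (the flipped form of \eqref{a ot 1}) gives $\wbar{(a'\ps{1})^{[2]}\,c\,a'\ps{0}(a'\ps{1})^{[1]}}=\wbar{a'c}$. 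Combining the two, the left-hand side above equals $f\big((a'\ps{1})^{[2]}ca'\ps{0}(a'\ps{1})^{[1]}\big)=f(a'c)$, which is the right-hand side. The delicate point throughout is bookkeeping the order of tensorands so that every $B$-element lands at a cyclic boundary, letting all three lemmas apply cleanly.

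Finally I would verify that $\psi$ and $\phi$ are mutually inverse and that $\psi$ is $H$-linear. The relation $\phi\psi=\Id$ is immediate from $\tau(1)=1\ot_B 1$. For $\psi\phi=\Id$ and the module property I would establish the single identity $[h\ot a]=[1\ot h^{[2]}ah^{[1]}]$ in $M$: setting $g=1$, $a'=h^{[2]}$, $c=ah^{[1]}$ in the relation of $M$, and then applying the linear map $x\ot g'\mapsto g'\ot ax$ to the translation identity \eqref{trans-can}, which reads $h^{[1]}h^{[2]}\ps{0}\ot h^{[2]}\ps{1}=1\ot h$, collapses the resulting expression back to $[h\ot a]$. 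The same computation shows $\psi(h\rt\wbar a)=[1\ot h^{[2]}ah^{[1]}]=[h\ot a]=h\cdot\psi(\wbar a)$, so $\psi$ is $H$-linear. Being a bijective morphism of both the $H$-module and the $H$-comodule structures, $\psi$ is an isomorphism of SAYD modules, the stability and AYD axioms for $A_B$ being transported from $M$ through Theorem \ref{thm-SAYD}.
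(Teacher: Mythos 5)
Your argument is correct, and although the underlying isomorphism is the same as the paper's --- the map $[h\ot a]\mapsto\wbar{h^{[2]}ah^{[1]}}$ of \eqref{SAYD iso}, resting on the same three preparatory lemmas \eqref{tau hh'}, \eqref{B centr A ot A} and \eqref{a ot 1} --- the way you certify that it is an isomorphism of SAYD modules is genuinely different and in places lighter. The paper obtains bijectivity by pushing the relation subspace of $M$ through the bijection $\alpha$ (the flipped $can^{-1}$) and recognising its image as the commutators whose quotient is $(A\ot_A A)_B=A_B$; it then checks right $H$-colinearity of the induced map $M\to A_B$ head-on, and that check is the most delicate part of the published proof, since it invokes the fact that the translation map is a morphism of $H$-bicomodules and needs several antipode applications and flips. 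You instead exhibit the explicit inverse $\psi:\wbar{a}\mapsto[1\ot a]$ and verify colinearity on $\psi$, where it is a one-line consequence of the coaction formula \eqref{comod} at $h=1$; bijectivity then transports colinearity back to $\phi$, so the bicomodule property of $\tau$ is never used. Your key identity $[h\ot a]=[1\ot h^{[2]}ah^{[1]}]$, extracted from the defining relation of $M$ together with \eqref{trans-can}, simultaneously yields $\psi\circ\phi=\Id$ and the $H$-linearity of $\psi$, which is tidier than the paper's separate appeal to the flipped \eqref{tau hh'}. The price is that you must check well-definedness of both $\phi$ and $\psi$ by hand, but the $\psi$ check (set $a'=b\in B$ in the relation of $M$) is immediate, and your two-step well-definedness argument for $\phi$ is in substance the same computation the paper performs for $\alpha(ha'\ps{1}\ot aa'\ps{0})$, merely carried out in $A_B$ rather than in $(A\ot A)_B$.
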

\begin{proof}
Applying the bijective linear map (the flipped $can^{-1}$) 
\begin{align}\label{alpha iso}
\gamma: \C{H}\ot A\rightarrow (A\ot A)_{B},\ \ \ \ \mb{h}\ot a\mapsto \mb{h}^{[2]}\ot a\mb{h}^{[1]}
\end{align}
to $\mb{h}a'\ps{1}\ot aa'\ps{0} \in H \ot A$ and next the flipped version of the identity \eqref{tau hh'}, we obtain 
\begin{align*}
&\gamma (\mb{h}a'\ps{1}\ot aa'\ps{0})=(\mb{h}a'\ps{1})^{[2]}\ot aa'\ps{0}(\mb{h}a'\ps{1})^{[1]}\\
& = \mb{h}^{[2]}a'\ps{1}^{\hspace{0.5em}[2]}\ot aa'\ps{0}a'\ps{1}^{\hspace{0.5em}[1]}\mb{h}^{[1]}=\mb{h}^{[2]}a'\ot a\mb{h}^{[1]}.
\end{align*}
in view of \eqref{a ot 1}. Note that the application of \eqref{a ot 1} to prove the latter equality 
is legitimate by the flipped version of \eqref{B centr A ot A}.

We have also
\[
\gamma(\mb{h}\ot a'a)=\mb{h}^{[2]}\ot a'a\mb{h}^{[1]},
\]
hence finally 
\[
\gamma(\mb{h}a'\ps{1}\ot aa'\ps{0}-\mb{h}\ot a'a)=\mb{h}^{[2]}a'\ot a\mb{h}^{[1]}-\mb{h}^{[2]}\ot a'a\mb{h}^{[1]}.
\]
Since by \eqref{alpha iso} the elements $\mb{h}^{[2]}\ot a\mb{h}^{[1]}$ span $(A\ot A)_{B}$, the latter means that \eqref{alpha iso} induces the isomorphism
\begin{align}\label{SAYD iso} 
M\to (A\ot_{A}\hspace{-0.15em}A)_{B}=A_{B},\ \ \ \ \mb{h}\ot a\mapsto \mb{h}^{[2]}a\mb{h}^{[1]}.
\end{align}
Note that by the flipped version of \eqref{tau hh'}, the latter isomorphism is left $\C{H}$-linear. Its right $\C{H}$-colinearity, on the other hand, follows from the fact that the translation map is a morphism of $\C{H}$-bicomodules \cite[Prop. 3.6]{Brze96-II} which is equivalent to the identity 
\[
\mb{h}\ps{1}\ot \mb{h}\ps{2}^{\hspace{0.5em}[1]} \ot_{B}\mb{h}\ps{2}^{\hspace{0.5em}[2]}\ot  \mb{h}\ps{3} 
= S^{-1}(\mb{h}^{[1]}\ps{1})\ot \mb{h}^{[1]}\ps{0}\ot_{B} \mb{h}^{[2]}\ps{0}\ot \mb{h}^{[2]}\ps{1}
\]
in $\C{H}\ot A\ot_{B}A \ot \C{H}$. Applying the antipode $S:\C{H}\to \C{H}$ to the left most tensorand, next  moving the first  tensorand to the last  position, and finally flipping the first two  tensorands of both sides of the resulting identity, we arrive at the identity
\[
\mb{h}\ps{2}^{\hspace{0.5em}[2]}\ot \mb{h}\ps{2}^{\hspace{0.5em}[1]}\ot  \mb{h}\ps{3} \ot S(\mb{h}\ps{1})   
=  \mb{h}^{[2]}\ps{0}\ot \mb{h}^{[1]}\ps{0}\ot \mb{h}^{[2]}\ps{1}\ot \mb{h}^{[1]}\ps{1}
\]
in $ (A\ot A)_{B}\ot \C{H} \ot \C{H}$. The latter implies 
\[
\mb{h}\ps{2}^{\hspace{0.5em}[2]}a\ps{0} \mb{h}\ps{2}^{\hspace{0.5em}[1]}\ot  \mb{h}\ps{3} a\ps{1} S(\mb{h}\ps{1})   
=  \mb{h}^{[2]}\ps{0}a\ps{0} \mb{h}^{[1]}\ps{0}\ot \mb{h}^{[2]}\ps{1}a\ps{1} \mb{h}^{[1]}\ps{1}
\]
in $ A_{B}\ot \C{H} $, which amounts to the fact that the map \eqref{SAYD iso} of SAYD modules is also $\C{H}$-colinear. 
\end{proof}
\subsection{Module-coalgebra-Galois extensions: quantized  Hopf principal bundles}
In the case of quantizations of principal bundles  beyond the Hopf-Galois context, our characteristic map is the extension of the Jara-\c{S}tefan isomorphism \cite{JaraStef06} to the context of module-coalgebra-Galois extensions. The most important case requiring such generality is the quantum instanton bundle, or a quantum  Hopf principal bundle, introduced in \cite{BoneCiccTarl02} as a quantization of the classical  counterpart and proven to be a module coalgebra-Galois extension in \cite{BoneCiccDabrTarl04}\footnote{It was note explicitly stated there but follows easily from the formulas therein.}, and the quantum Hopf principal bundle over the Podle\'{s} sphere \cite{Pod87, Brze96, Brze97}. 



\begin{theorem}\label{thm-HP} For any pair $(C, A)$ of an $\C{H}$-entwined right coaugmented $\C{H}$-module coalgebra $C$ and a $C$-Galois right $\C{H}$-comodule algebra  extension $A$  of an algebra $B$, the relative periodic cyclic homology of the extension $B\subseteq A$ is isomorphic to the Hopf-cyclic homology of the right $\C{H}$-module coalgebra $C$ with SAYD-coefficients in $M$. In short,
\[
HP_{\bullet}(A\hspace{0.2em}|\hspace{0.1em}B)\cong HP^{\C{H}}_{\bullet}(C, M). 
\]
\end{theorem}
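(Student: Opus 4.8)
The plan is to obtain the stated isomorphism by assembling the chain of isomorphisms of cyclic objects established in the previous sections and then invoking the functoriality of periodic cyclic homology. Recall that an isomorphism of cyclic objects is in particular an isomorphism of the underlying mixed complexes, and hence induces an isomorphism of all the associated homology theories (Hochschild, cyclic, negative cyclic, and in particular periodic cyclic). Thus it suffices to exhibit an isomorphism of cyclic objects between the complex computing $HP_\bullet(A|B)$ and the Hopf-cyclic complex $(C\ot\cdots\ot C)\ot_H M$ computing $HP^H_\bullet(C,M)$, and then apply this functoriality.

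First I would recall that, by the definition given in Section \ref{Cyclic homology of algebra extensions}, the relative periodic cyclic homology $HP_\bullet(A|B)$ is the periodic cyclic homology of the cyclic object $Z(A|B,A)$ with $Z_n(A|B,A)=A^{\widehat{\ot}_B\,n+1}$. By the isomorphism \eqref{aux-iso-coring-FY-to-alg-ext-Z} this cyclic object is identified with $\FY_\bullet(A\ot_B A,A^e,A)$, the Hopf-cyclic complex of the Sweedler coring with coefficients in the SAYD module $A$ over the $\times_A$-Hopf algebra $A^e$ of Example \ref{example-SAYD-on-A-env}. Next, the Galois hypothesis enters: because the canonical map \eqref{aux-canonical-map} is bijective, the translation map exists, and iterating it produces the isomorphism of cyclic objects \eqref{cyclic-mod-A-tensor-C}, with inverse \eqref{cyclic-mod-A-tensor-C-inverse}, identifying the Sweedler-coring complex $\FY_\bullet(A\ot_B A,A^e,A)$ with the Galois-coring complex $\FY_\bullet(A\ot C,A^e,A)$. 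This is the only place where the Galois condition is used.

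The decisive step is then Theorem \ref{thm-A-C-to-C-H-M}, whose map $\Phi$ identifies $\FY_\bullet(A\ot C,A^e,A)$ with $(C\ot\cdots\ot C)\ot_H M$ as cyclic objects. Composing the three isomorphisms yields an isomorphism of cyclic objects
\[
Z_\bullet(A|B,A)\;\cong\;(C\ot\cdots\ot C)\ot_H M.
\]
It then remains to recognise the right-hand side as the cyclic object computing the Hopf-cyclic homology of the $H$-module coalgebra $C$ with coefficients in $M$: it is precisely the complex $\FY_\bullet(\FC,\mathcal{B},M)$ of Section \ref{Hopf-cyclic homology of corings} specialised to the case where the base algebra is the ground field, the $\times$-Hopf algebra is the ordinary Hopf algebra $H$, and the module coring is $\FC=C$, so that $\ot_A$ becomes the tensor product over the field and $\FY_n=C^{\ot n+1}\ot_H M$. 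For this right-hand side to be a genuine cyclic object one needs $M$ to be a SAYD $H$-module, which is exactly the content of Theorem \ref{thm-SAYD}; that theorem is therefore an indispensable input. Passing to periodic cyclic homology on both sides of the displayed isomorphism delivers $HP_\bullet(A|B)\cong HP^H_\bullet(C,M)$.

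I do not expect a serious computational obstacle, since the heavy lifting has already been carried out in Theorem \ref{thm-A-C-to-C-H-M} and in the identifications \eqref{aux-iso-coring-FY-to-alg-ext-Z} and \eqref{cyclic-mod-A-tensor-C}. The one point deserving care is the verification that the cyclic structure transported to $(C\ot\cdots\ot C)\ot_H M$ through $\Phi$ coincides with the standard Hopf-cyclic structure $\FY_\bullet(C,H,M)$, so that the right-hand side genuinely computes $HP^H_\bullet(C,M)$ and not merely some abstractly isomorphic invariant; this, however, is already subsumed in the assertion of Theorem \ref{thm-A-C-to-C-H-M} that $\Phi$ is an isomorphism of cyclic objects onto the target equipped with its intrinsic structure.
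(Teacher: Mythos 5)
Your proposal is correct and follows exactly the route the paper intends: the paper proves Theorem \ref{thm-HP} precisely by composing the identification \eqref{aux-iso-coring-FY-to-alg-ext-Z} of $Z_\bullet(A|B,A)$ with the Sweedler-coring complex, the Galois-condition isomorphism \eqref{cyclic-mod-A-tensor-C} to the Galois-coring complex, and the isomorphism $\Phi$ of Theorem \ref{thm-A-C-to-C-H-M}, then passing to periodic cyclic homology. Your added remarks on where the Galois hypothesis enters and on why Theorem \ref{thm-SAYD} is needed for the target to be a cyclic object are accurate and consistent with the paper.
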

Finally, we note that it would require further investigation to study what happens to the Gauss-Manin connection under this isomorphism.
\section{Applications}\label{Special cases}
In the present section we explain that the SAYD module constructed in Theorem~\ref{thm-SAYD} is a noncommutative counterpart of the Brylinski $G$-scheme. Therefore, it provides an SAYD-quantization of the inertia groupoid regarded as the action groupoid of the Brylinski $G$-scheme.   Next, we specialise our construction to the case of the universal Hopf algebra  coaction of a finite dimensional algebra. Then the resulting Hopf-cyclic homology provides an invariant of a finite dimensional algebra. Finally, we show that our construction, restricted to the case of  a Hopf-Galois extensions, recovers the SAYD module constructed by 
Jara and \c{S}tefan in \cite{JaraStef06}. The latter implies that in the Hopf-Galois case our characteristic map (\ref{char}) restricts to the  Jara--\c{S}tefan isomorphism of cyclic objects~\cite{JaraStef06}.

\subsection{Inertial Hopf-cyclic quantization of the cyclic nerve via cyclic duality.} 
\subsubsection{SAYD-quantization of the inertia groupoid.} 
 If the algebra $A$ and  the Hopf algebra $\C{H}$ are commutative, the affine scheme  $X:=Spec(A)$  is acted on by the affine group scheme  $G:=Spec(\C{H})$. The inertia groupoid of that action is an action groupoid of the Brylinski $G$-scheme. Then, the following theorem identifies our SAYD module with the algebra of regular functions on the Brylinski scheme.

\begin{proposition}\label{sayd-bryl}
If the Hopf algebra $\C{H}$ and the right $\C{H}$-comodule algebra $A$ are commutative, the SAYD module $M$ becomes a commutative right $\C{H}$-comodule algebra.  If $G=Spec(\C{H})$ is a corresponding affine group scheme and  $X=Spec(A)$ a corresponding affine $G$-scheme, $Spec(M)$ is the Brylinski $G$-scheme, on points defined as
\[
Bry_{G}(X):=\{ (g, x)\in Ad(G)\times X\ \mid\ xg=x\},
\]
with the diagonal right $G$-action by right conjugations on $Ad(G)=G$ and a given right $G$-action on $X$, equipped with a $G$-equivariant morphism into $Ad(G)$.  
\end{proposition}
\begin{proof}
Looking at the definition \eqref{M} of $M$ one sees that in the coproduct  $\C{H}\ot A$ of commutative algebras we have a decomposition 
\[ \mb{h}a'\ps{1}\ot aa'\ps{0}-\mb{h}\ot a'a = (\mb{h}\ot a)(a'\ps{1}\ot a'\ps{0}-1\ot a').\]
Therefore, the algebra structure of the quotient $M$ comes from the fact that in the algebra $\C{H}\otimes A$ the $\C{H}$-submodule of relations defining $M$ is an ideal in $\C{H}\ot A$
generated by elements of the form
\[
a'\ps{1}\ot a'\ps{0}-1\ot a'.
\]
It defines a closed $G$-invariant subscheme of the $G$-scheme $G\times X$ with the diagonal $G$-action by conjugations on $G$, which results from evaluation the $H$-comodule structure on points. By evaluating on points the $H$-module structure gives a map  into $Ad(G)$ induced by the restriction of the projection  $Ad(G)\times X\rightarrow  Ad(G)$. By the AYD-property the map into $Ad(G)$ is $G$-equivariant. The stability  property of the SAYD module is equivalent to the fact that in the pair $(g, x)$ the group element $g$ stabilizes $x$.  
\end{proof}
\subsubsection{Cyclic-dual Hopf-cyclic quantization of the functor induced map of cyclic nerves} Assume now that $C:=\mathscr{O}(H)$ is the underlying right $\mathcal{H}$-module coalgebra of a monoid scheme $H$ equipped with an affine monoid scheme morphism $H\rightarrow G$. We will call this context \emph{the classical case}. Note that in this case the antipode of $\mathcal{H}$ is obviously invertible. Then the following theorem relates our characteristic map (\ref{char}) with the functor induced map of cyclic nerves and cyclic duality in the commutative case.
\begin{proposition}
In the classical case, after applying the composition of the functor $\mathscr{O}(-)$ of global regular functions and the cyclic duality to the commutative diagram (\ref{iso-cyc-fun}) of affine schemes over a ground field, we obtain the commutative diagram \eqref{q-iso-cyc-fun} of  
cyclic vector spaces. 
\end{proposition} 
\begin{proof} The contravariant functor $\mathscr{O}(-)$ transforms the group action on a space into a Hopf algebra module algebra structure and the constant map and the diagonal embedding of an affine scheme into the unit and the multiplication of the algebra of regular functions. Therefore, by the definition of the cyclic objects and morphisms in the diagram  (\ref{iso-cyc-fun}) and Proposition \ref{sayd-bryl}, $\mathscr{O}(-)$ transforms it into a diagram of cocyclic objects. Looking at its cofaces, codegeneracies and cocyclic map one sees that the resulting diagram is cyclic dual to \eqref{q-iso-cyc-fun}.  
\end{proof}
\noindent{\bf Remark.} It means that our characteristic map \eqref{char} can be regarded as a \emph{Hopf-cyclic quantization} of the cocyclic dual of the internal functor induced map (\ref{iso-cyc-fun}) of cyclic nerve schemes.

\subsection{Geometry of the quotient map versus inertia of the action}
All schemes and their morphisms in this section are defined over the field $\mathbb{C}$ of complex numbers. Let there be given a right action of an affine group scheme $G$ on an affine scheme $X$, admitting   a quotient map $f: X\rightarrow X/G=:Y$. Then for $C=\mathcal{H}:=\mathscr{O}(G)$, $A:=\mathscr{O}(X)$, $B:=\mathscr{O}(X/G)=A^{{\rm co}\hspace{0.1em}C}$
The $B$-relative periodic cyclic homology ${\rm HP}_\bullet (A\hspace{0.2em}|\hspace{0.1em}B)$ is an invariant of the geometry of the quotient map $f$. To study $HP_{\bullet}(A\hspace{0.2em}|\hspace{0.1em}B)$ one can resort to the result of Katz that existence of an (integrable) connection on a coherent sheaf implies its local freeness~\cite{Katz70}. In the case of $HP_{\bullet}(A\hspace{0.2em}|\hspace{0.1em}B)$ it is the Gauss--Manin connection \cite{Getz93, Tsyg07, PetrVainVolo18, GinSched12} whose existence, in quite wide generality, is guaranteed by the following result of 
Ginzburg and Schedler.
\begin{theorem}[Thm 4.4.1, \cite{GinSched12}]\label{GinSched}
Let $B$ be a commutative algebra. Let A be an associative algebra equipped with
a central algebra embedding $B\hookrightarrow A$ such that the quotient $A/B$ is a free $B$-module.
Then, there is a canonical flat connection $\nabla_{GM}$ on $HP_{\bullet}(A\hspace{0.2em}|\hspace{0.1em}B)$.
\end{theorem}

 
\subsubsection{The smooth case: free actions and topology} If $f$ is a smooth morphism of smooth varieties, by Andr\'{e}'s $B$-relative Hochschild--Kostant--Rosenberg theorem \cite{And74}, \break
$HP_{\bullet}(A\hspace{0.2em}|\hspace{0.1em}B)$,  is isomorphic to $\mathbb{Z}/2\mathbb{Z}$-graded $B$-relative de Rham cohomology $H_{\rm dR}^{\bullet}(A\hspace{0.2em}|\hspace{0.1em}B)$. The latter admits analytification to become an invariant of the analytic submersion $f^{\rm an}: X^{\rm an}\rightarrow~ Y^{\rm an}$ of complex manifolds. It is an analytic $\mathbb{Z}/2\mathbb{Z}$-graded
vector bundle with fibers being the de~Rham cohomology of the fibers of $f^{\rm an}$. When equipped with the Gauss-Manin integrable connection $\nabla_{GM}$, its subsheaf of $\nabla_{GM}$-parallel sections of its analytification coincides with  the $\mathbb{Z}/2\mathbb{Z}$-graded local system $R^{\bullet}f^{\rm an}_{*}\mathbb{C}_{X^{\rm an}}$ spanning, by integrability of $\nabla_{GM}$, that analytic $\mathbb{Z}/2\mathbb{Z}$-graded vector bundle. Therefore, in this case, the analytification of $HP_{\bullet}(A\hspace{0.2em}|\hspace{0.1em}B)$ depends only on the relative euclidean topology of~$f^{\rm an}$.

In particular, if the $G$-action is free, we are in the commutative context of Hopf-Galois extensions, when the characteristic map is an isomorphism of $B$-modules
\begin{align}\label{char_per_iso}
HP_{\bullet}(A\hspace{0.2em}|\hspace{0.1em}B)\stackrel{\cong}{\longrightarrow} HP^{\C{H}}_{\bullet}(C, M)
\end{align}
\noindent{\bf Remark.} In this case, after analytification of the characteristic map \eqref{char_per_iso}, the invariant of the free action  coincides with the invariant of topology of the smooth quotient map.
\subsubsection{A non-smooth case: inertia and ramification} The following example shows that sometimes, using Theorem \ref{GinSched}, we can study $HP_{\bullet}(A\hspace{0.2em}|\hspace{0.1em}B)$ using $H_{\rm dR}^{\bullet}(A\hspace{0.2em}|\hspace{0.1em}B)$ also in a non-smooth case, e. g. when the quotient map is the following branched covering.

\noindent{\bf Example.} In the case of a double Galois branched covering with $A=B[x]/(x^{2}-b)$ and $b$ not a zero-divisor, acted on by $G=\Bmu_{2}$ \big(with the group of complex points $G(\mathbb{C})=\mathbb{Z}/2\mathbb{Z}$ and the algebra of regular functions $C=\mathcal{H}=\mathscr{O}(G)=\mathbb{C}[t]/(t^{2}-1)$\big) by the reflection $x\mapsto -x$, we have $A^{G}=A^{\rm co\hspace{0.075em}C}=B\subset A = B\oplus Ba$ where $a:=x + (x^{2}-b)\in A$ is not a zero-divisor,  and $B\hookrightarrow A$ defines a branched covering of degree two with the branch locus defined by the ideal $(b)\lhd B$. Since, as the differential 
graded-commutative algebra, the de~Rham complex has the following presentation
\begin{align*}
\Omega^{\bullet}_{A/B}=B[x, dx]/(x^{2}-b, xdx), \end{align*}
$\Omega^{0}_{A/B}=0$ for $p>1$, and the only non-zero de~Rham differential \begin{align}\label{dR}
\begin{array}{ccc}
\Omega^{0}_{A/B} & \stackrel{d}{\longrightarrow} & \Omega^{1}_{A/B}\\
\parallel                 &                  &  \parallel\\           
B\oplus Ba             & \longrightarrow  &  \big(B/(b)\big)da,\\
b_{0}+b_{1}a        & \longmapsto      & \big(b_{1}+(b)\big) da 
\end{array}
\end{align}
 is surjective, hence $H_{\rm dR}^{p}(A\hspace{0.2em}|\hspace{0.1em}B)  =  0$ for $p>0$. From \eqref{dR} we infer also that  
$H_{\rm dR}^{0}(A\hspace{0.2em}|\hspace{0.1em}B)  =  B\oplus (b)a\subset A$ is a free $B$-module. On the other hand, since $A/B\cong Ba$ is free, by Theorem~\ref{GinSched} and the Katz result, $HP_{\bullet}(A\hspace{0.2em}|\hspace{0.1em}B)$ is a locally free $B$-module. Finally, since both $HP_{\bullet}(A\hspace{0.2em}|\hspace{0.1em}B)$ and $H_{\rm dR}^{\bullet}(A\hspace{0.2em}|\hspace{0.1em}B)$ commute with localization of $B$, outside the branch locus, where $f$ is smooth (\'{e}tale), we have the isomorphism of $B[b^{-1}]$-modules
\begin{align*}
& HP_{\bullet}(A\hspace{0.2em}|\hspace{0.1em}B)\ot_{B}\hspace{-0.175em}B[b^{-1}]\cong HP_{\bullet}(A\ot_{B}\hspace{-0.175em}B[b^{-1}]\hspace{0.2em}|\hspace{0.1em}B[b^{-1}])\\
\cong\hspace{0.3em} & H_{\rm dR}^{\bullet}(A\ot_{B}\hspace{-0.175em}B[b^{-1}]\hspace{0.2em}|\hspace{0.1em}B[b^{-1}])\cong H_{\rm dR}^{\bullet}(A\hspace{0.2em}|\hspace{0.1em}B)\ot_{B}\hspace{-0.175em}B[b^{-1}]\cong B[b^{-1}]\oplus B[b^{-1}],
\end{align*}
hence $HP_{\bullet}(A\hspace{0.2em}|\hspace{0.1em}B)$
is a locally free $B$-module of rank two, free outside the branch locus. 


The SAYD module of the algebra of regular functions on the Brylinski scheme is 
\begin{align*}
M=\mathscr{O}\hspace{-0.0175em}\big({\rm Bry}_{G}(X)\big) = B[t,x]/\left(t^{2}-1, x^{2}-b, (t-1)x\right) \cong B\oplus B\delta \oplus Ba \cong B\ot \big(\mathcal{H}\oplus \mathbb{C}_{\varepsilon, \delta}\big)
\end{align*}
where $\delta:= t\hspace{0.2em} {\rm mod}\hspace{0.2em} (t^{2}-1)$ is a grouplike in $\mathcal{H}$, and the utmost right direct summands $\mathcal{H}$ and $\mathbb{C}_{\varepsilon, \delta}$ are  SAYD modules being the Hopf algebra itself, with its standard SAYD module, and the one dimensional SAYD module defined by the modular pair in involution $\left(\varepsilon, \delta\right)$, respectively. Therefore,
\begin{align*}
HP^{\C{H}}_{\bullet}(\C{H}, M) = B\ot\big(HP^{\C{H}}_{\bullet}(\C{H}, \C{H}) \oplus HP^{\C{H}}_{\bullet}(\C{H}, \mathbb{C}_{\varepsilon, \delta}) \big)
\end{align*}
is a free $B$-module,  spanned by the Hopf-cyclic homology dependent only on $\C{H}$ and its grouplike~$\delta$. Since over $B[b^{-1}]$ the characteristic map is an isomorphism of free $B[b^{-1}]$-modules, by tensoring with the residue field $B[b^{-1}]/\mathfrak{m}$ we obtain an isomorphism of vector spaces
\[
HP^{\C{H}}_{\bullet}(\C{H}, \C{H}) \oplus HP^{\C{H}}_{\bullet}(\C{H}, \mathbb{C}_{\varepsilon, \delta})\cong \mathbb{C}^{2}
\]
which provides  the isomorphism
\[
HP^{\C{H}}_{\bullet}(\C{H}, M)\cong B^{2}.
\]
It implies that the characteristic map
\begin{align}\label{char_per}
HP_{\bullet}(A\hspace{0.2em}|\hspace{0.1em}B)\longrightarrow HP^{\C{H}}_{\bullet}(C, M)
\end{align}
is an injective morphism from a locally free $B$-module to a free module, both of rank two, whose cokernel is supported on the branch locus. 

\noindent{\bf Remark.} In this case, after analytification of the characteristic map \eqref{char_per}, the invariant of the inertia modulo the invariant of topology of the maximal free action is an invariant of ramification of the quotient map, supported on the branch locus.
\subsection{The canonical inertial Hopf-cyclic object of a finite dimensional algebra}
In the present subsection we will use the fact that for a finite dimensional algebra $A$ there is a universal right Hopf-algebra coaction on $A$. The  universal Hopf algebra is  Manin's  Hopf envelope \cite{Man88} of  Tambara's coendomorphism bialgebra \cite{Tam90} of the finite dimensional algebra $A$. Universality of the coaction can be viewed as a commutative  diagram as  (\ref{diagram}), where $\widetilde{\alpha}$ is a universal  Hopf algebra $\widetilde{\C{H}}$ coaction  on $A$, which means that for any other Hopf algebra $\C{H}$ coaction $\alpha$ on $A$ there exists a unique Hopf algebra map $\varphi: \widetilde{\C{H}}\rightarrow \C{H}$ making the diagram  (\ref{diagram}) commute. Therefore, our inertial cyclic object for any Hopf algebra coacting on $A$  admits a canonical inertial cyclic object map from such a cyclic object defined for the universal Hopf algebra coaction. We call the latter the \emph{universal inertial cyclic module}. Moreover, the latter cyclic object is an invariant of the finite dimensional algebra. In particular, finite dimensional algebras with different universal inertial cyclic, negative cyclic or periodic cyclic homology or cyclic-dual cohomology, cannot be isomorphic. In particular, given finite-dimensional algebra, our characteristic map maps  
the periodic cyclic homology to our universal inertial periodic Hopf-cyclic homology. We expect that the latter can distinguish algebras with the same periodic cyclic homology, a poor invariant of finite dimensional algebras which, by the Goodwillie theorem \cite{good}, is insensitive to nilpotent extensions.

\vspace{-0.5em}
\section*{Acknowledgements}
\noindent This work was partially supported by NCN grant UMO-2015/19/B/ST1/03098.

\bibliographystyle{plain}
\bibliography{references}{}
\end{document}